\newtheorem{Theorem}{Theorem}[section]
\newtheorem{theorem}[Theorem]{Theorem}
\newtheorem{Lemma}[Theorem]{Lemma}
\newtheorem{lemma}[Theorem]{Lemma}
\newtheorem{Fact}[Theorem]{Fact}
\newtheorem{fact}[Theorem]{Fact}
\newtheorem{remark/def}[Theorem]{Remark/Definition}
\theoremstyle{definition}
\newtheorem{Remark}[Theorem]{Remark}
\newtheorem{remark}[Theorem]{Remark}
\newtheorem{Definition}[Theorem]{Definition}
\newtheorem{definition}[Theorem]{Definition}
\newtheorem{defn}[Theorem]{Definition}
\newtheorem{question}[Theorem]{Question}
\newtheorem{def/rem}[Theorem]{Definition/Remark}
\newtheorem{not/rem}[Theorem]{Notation/Remark}
\newcommand{\ind}[1][]{%
  \mathrel{
    \mathop{
      \vcenter{
        \hbox{\oalign{\noalign{\kern-.3ex}\hfil$\vert$\hfil\cr
              \noalign{\kern-.7ex}
              $\smile$\cr\noalign{\kern-.3ex}}}
      }
    }\displaylimits_{#1}
  }
}
\newcommand{\inds}[1][]{%
  \mathrel{
    \mathop{
      \vcenter{
        \hbox{\oalign{\noalign{\kern-.3ex}\hfil$\shortmid$\hfil\cr
              \noalign{\kern-.7ex}
              $\smile$\cr\noalign{\kern-.3ex}}}
      }
    }\displaylimits_{#1}
  }
}
\def \indk {\ind^K}
\def\lex{\operatorname{lex}}
\newcommand{\be}{\begin{enumerate}}
\newcommand{\bd}{\begin{defn}}
\newcommand{\bt}{\begin{theorem}}
\newcommand{\bl}{\begin{lemma}}
\newcommand{\ee}{\end{enumerate}}
\newcommand{\ed}{\end{defn}}
\newcommand{\et}{\end{theorem}}
\newcommand{\el}{\end{lemma}}
\newcommand{\la}{\langle}
\newcommand{\ra}{\rangle}
\newcommand{\CL}{{\mathcal L}}
\newcommand{\CM}{{\mathcal M}}
\newcommand{\CT}{{\mathcal T}}
\newcommand{\dom}{\mbox{dom}}
\def\dom{\operatorname{dom}}
\def\tp{\operatorname{tp}}
\title{Existence  in NSOP$_1$ theories}
\author{Byunghan Kim, Joonhee Kim, and Hyoyoon Lee}
\date{\today}
\address{Department of Mathematics\\
 Yonsei University\\
 Seoul, South Korea}
 \email{bkim@yonsei.ac.kr}
 \address{School of Mathematics\\
 Korea Institute for Advanced Study\\
 Seoul, South Korea}
 \email{kimjoonhee@kias.re.kr}
 \address{Center for Nano Materials\\
 G-LAMP\\
 Sogang University\\
 Seoul, South Korea \newline
 \indent Department of Mathematics\\
 Yonsei University\\
 Seoul, South Korea}
 \email{hyoyoonlee@sogang.ac.kr, hyoyoonlee@yonsei.ac.kr}
\begin{document}
\maketitle
\begin{abstract}
We show that Kim-forking satisfies existence  in all NSOP$_1$ theories. 
\end{abstract}

\bigskip

Recently NSOP$_1$ theories have been  significantly  studied 
 mainly due to that in NSOP$_1$ theories, nice properties such as symmetry, transitivity, and the Independence 
Theorem   hold over models with  Kim-independence, as  proved in \cite{KR20},\cite{KR21}. Later  in \cite{CKR20},\cite{DKR22},
the results are extended `over arbitrary sets' in any NSOP$_1$ 
theory with existence.
The class of NSOP$_1$ theories is considered as the proper superclass of that of simple theories, where  parallel  theoretic development  to that on simple theories is expected to be made. In particular all simple theories have existence. Therefore it is a burning question to know whether every NSOP$_1$ theory has
existence \footnote{Shortly after the first draft of this paper was presented, Mutchnik showed that there is an NSOP$_1$ theory that does not satisfy the existence axiom. See \cite{Mut24}.}. We investigate this problem in this paper and achieve a partial success. Namely we show that Kim-forking satisfies existence (over all sets) in all
NSOP$_1$ theories. More precisely, the failure of existence can not be witnessed by Morley sequences, in all NSOP$_1$ theories. 
This implies that any complete type in NSOP$_1$ $T$ has $\indk$-Morley sequences in it. Hence in such a theory, it makes sense to define 
a new kind of `dividing' of a formula witnessed by such a sequence (which is known to be equivalent to the usual `Kim-dividing' when the theory has existence),
 although we do not at all intend to push forward its study in this note. 
 
  In \S1, we collect basic definitions and facts necessary for the rest of the paper.  In \S2, preliminary lemmas  playing  key roles in 
  proving the main theorem are presented.  In \S3, we state and prove our main theorem that Kim-forking satisfies existence in all NSOP$_1$ $T$.
  We will discuss further related issues in \S4.

\section{Introduction} \label{intro}

In this section we summarize necessary definitions and facts (from  \cite{CR16}-\cite{Kim13}, etc.) which will be used freely throughout the paper. 
We work with a complete theory $T$ in a language $\CL$ with its  
large model $\CM$.
As usual,  (small) sets $A, B, C, \dots$ and (possibly infinite) tuples $a_i, b_i,\dots$  are from $\CM$ and models $M, N,\dots$ are small elementary submodels of $\CM$. 
 Given a sequence $(A_i\mid i<\kappa)$ and $j'<j<\kappa$, in addition to
$A_{<j}$, $A_{\geq j}$ and so on, we use $A_{j'<<j}$, $A_{\ne k}$ to denote  $(A_k\mid j'<k<j)$, $(A_i\mid i(\ne k)\in \kappa)$, respectively. 
\medskip 

We begin to recall basic definitions related to dividing and forking. We also recall
the notion of a {\em Morley sequence over $(A,B)$} originated from \cite{S}, which is useful in developing our arguments.  

 \begin{Definition}
 \be\item We say a formula $\varphi(x,a_0)$ {\em divides} over $A$ if there is an $A$-indiscernible sequence $\la a_i\mid i<\omega\ra$ such that $\{\varphi(x,a_i)\mid i<\omega\}$ is inconsistent.  A formula {\em forks} over $A$ if the formula implies some finite disjunction of formulas, each of which divides over $A$. Then we say a type divides/forks over $A$ if the type implies a formula which divides/forks over $A$.

 \item  We write $A\ind_BC$  if  $\tp(A/BC)$ does not fork over $B$.
 A sequence $I=\la a_i\mid i<\omega\ra$ is said to be $\ind$-{\em independent} (or just {\em independent}) over $A$ (or $A$-{\em independent}) if $a_i\ind_A a_{<i}$  for each $i<\omega$; if additionally $a_i\equiv_A a_0$ then we say it is $A$-independent 
 {\em in} $\tp(a_0/A)$. We say $I$ is {\em independent over $(A,B)$} if $a_i\ind_Aa_{<i}B$ for all $i\in\omega$.

\item  An $A$-independent  sequence $I=\la a_i\mid i<\omega\ra$ is said to be {\em Morley} over $A$ (or $A$-{\em Morley}) if  it is $A$-indiscernible.
 When  $I$ is Morley over $A$, we also call it  a Morley sequence {\em in} $\tp(a_0/A)$. $I$ is said to be {\em Morley over $(A,B)$}  if  it is independent over $(A,B)$ and $I$ is $AB$-indiscernible.

 \ee
 \end{Definition}

 We now  point out  the following handy fact which we will use  frequently.
 A proof can be found, say in \cite{Kim13}.

\begin{Fact} \label{e-r}
  Given $\kappa$, there is $\lambda= \lambda(\kappa)$ such that  for any sequence  $J=\la a_i: i<\lambda\ra$ of $\leq \kappa$-sized tuples
  and any set $A$ of size $\leq\kappa$, there is an $A$-indiscernible sequence 
  $\la b_i: i<\omega\ra$ which is {\em finitely based} on $J$ over $A$, i.e.    for any $b_{\leq n}$ there are
  $a_{i_{\leq n}}$ with $i_0<\dots <i_n\in \lambda$ such that $  b_{\leq n}\equiv_A a_{i_{\leq n}}$.
 \end{Fact}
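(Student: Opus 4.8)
The plan is to reduce the statement to the classical Erd\H{o}s--Rado extraction of indiscernibles, by colouring increasing tuples of $J$ by their type over $A$ and applying the partition theorem. First I would fix the colour count. Let $\mu$ be the number of complete types over $A$ in $\omega$ variables, each ranging over $\leq\kappa$-sized tuples; since $|A|\leq\kappa$ and $\CL$ is fixed, $\mu\leq 2^{\kappa+|\CL|}$, a bound depending only on $\kappa$. For each $n<\omega$ I define a colouring $c_n$ of the increasing $n$-element subsets of $\lambda$ by $c_n(\{i_0<\dots<i_{n-1}\})=\tp(a_{i_0}\dots a_{i_{n-1}}/A)$, so that all the $c_n$ together use at most $\mu$ colours.

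Next I would choose $\lambda=\lambda(\kappa)$ to sit sufficiently high in a beth tower over $\mu$ to run Erd\H{o}s--Rado (recalling $\beth_n(\mu)^+\to(\mu^+)^{n+1}_\mu$) and extract a set $X=\{x_0<x_1<\dots\}\subseteq\lambda$ of order type $\omega$ that is simultaneously homogeneous for every $c_n$, i.e. such that the colour $c_n(\{x_{j_0},\dots,x_{j_{n-1}}\})$ depends only on $n$. Setting $b_i:=a_{x_i}$, the sequence $\la b_i\mid i<\omega\ra$ is then $A$-indiscernible, since any two increasing $n$-tuples drawn from it receive the same colour and hence realise the same type over $A$; and it is finitely based on $J$ because $b_{\leq n}=a_{x_{\leq n}}$ with $x_0<\dots<x_n\in\lambda$, so $b_{\leq n}\equiv_A a_{x_{\leq n}}$ trivially. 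If one prefers not to commit to such a large $\lambda$, the equivalent route is to extract, for each finite set $\Delta$ of $\CL(A)$-formulas and each length $N$, an arbitrarily long finite $\Delta$-indiscernible sequence whose tuples are $A$-conjugate into $J$ --- here only finitely many colours and finitely many exponents arise, so finitely iterated Erd\H{o}s--Rado suffices --- and then to assemble the length-$\omega$ $A$-indiscernible sequence by compactness, the finitely-based condition being precisely the assertion that its finite types occur among increasing tuples of $J$.

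The step I expect to be the main obstacle is producing a \emph{single} infinite $X$ homogeneous for \emph{all} exponents $n$ at once: one application of Erd\H{o}s--Rado controls only a single exponent, and over a fixed countable index set no such $X$ need exist, as the relation $\aleph_0\to(\aleph_0)^{<\omega}_2$ already fails (colour a finite $s$ by whether $|s|\leq\min(s)$). This is exactly why $\lambda$ must be taken large as a function of $\kappa$. I would dispatch it either through the compactness assembly above, which only ever handles finitely many exponents at a time, or by iterating Erd\H{o}s--Rado when $\lambda$ is taken in a sufficiently high beth level, each exponent costing finitely many beth levels while the nested homogeneous sets stay of size $>\aleph_0$.
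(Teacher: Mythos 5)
You should first note that the paper does not prove this fact at all: it is quoted as standard and deferred to \cite{Kim13}, so the only meaningful comparison is with the standard argument. Your overall strategy (colour increasing tuples of $J$ by their complete type over $A$, giving at most $\mu=2^{\kappa+|\CL|}$ colours, apply Erd\H{o}s--Rado, and finish by compactness) is that argument in outline, and you have correctly located the crux: no single infinite set homogeneous for all exponents simultaneously can be extracted in ZFC. The problem is that neither of the two ways you propose to get around this actually closes the gap.

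First, the compactness assembly as you describe it proves the wrong statement. If at each finite stage you only control a finite set $\Delta$ of $\CL(A)$-formulas, then in the limit you obtain, for each $n$ and each finite $\Delta$, some increasing tuple of $J$ with the same $\Delta$-type as $b_{\leq n}$ --- with the witnessing tuple depending on $\Delta$. That is the ``locally based'' condition of the modeling property, not the ``finitely based'' condition of Fact \ref{e-r}, which demands a single increasing tuple $a_{i_{\leq n}}$ with $b_{\leq n}\equiv_A a_{i_{\leq n}}$ as full types; the set of complete types over $A$ realized by increasing tuples of $J$ need not be closed in $S_{n+1}(A)$, so ``$\tp(b_{\leq n}/A)$ is realized by an increasing tuple of $J$'' is not a type-definable condition and cannot be imposed in a compactness limit. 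This distinction is exactly the content of the fact: the locally based version already holds for $\lambda=\omega$ by Ramsey. Second, the nested Erd\H{o}s--Rado iteration cannot be sustained through infinitely many exponents: one application with exponent $n+1$ and $\mu$ colours returns a homogeneous set of size only $\mu^+$, which is below the threshold $\beth_{n+1}(\mu)^+$ needed for the next application; and if you instead use the strengthened relation $\beth_{\alpha+n}(\mu)^+\to(\beth_\alpha(\mu)^+)^{n+1}_\mu$ to keep the sets large, the required beth-indices must strictly decrease at each step, which is impossible along $\omega$ steps. The standard repair is to extract, for each $n$ separately, a complete $n$-type $q_n$ over $A$ that is homogeneously realized on sets of size at least $\beth_\alpha$ for cofinally many $\alpha$ below $\mu^+$ (pigeonhole: $\mu^+$ many applications of Erd\H{o}s--Rado at different beth levels, only $\mu$ possible colours), to check that these persistent types can be chosen to form a coherent chain $q_1\subseteq q_2\subseteq\cdots$ (an infinite set homogeneous for $(n+1)$-tuples is automatically homogeneous for $n$-tuples, since every increasing $n$-tuple from it extends on the right), and only then to apply compactness to the resulting Ehrenfeucht--Mostowski type, whose finite satisfiability is witnessed inside $J$ by actual tuples realizing the $q_n$. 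Some version of this persistence-plus-coherence step is needed; without it your proof does not go through.
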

 
Straightforward consequences from the definitions of dividing and forking 
are summarized as follows.  
 
 \begin{Fact}\label{basicsoffking}
 \be
 \item (Finite character) $A\ind_BC$ iff for any finite $a\in A$, $c\in C$ we have $a\ind_Bc$.
 
 \item (Base monotonicity)   If $D\ind_ABC$ holds then $D\ind_{AB}C$ follows. Hence if $I$ is Morley over $(A,B)$ then it is 
 Morley over $AB$ as well.
 
 \item (Left transitivity) If $C\ind_AB$ and $D\ind_{CA}B$  then we have $DC\ind_AB$. 
 
  \item (Extension)  
  For $A\subseteq B\subseteq C$, if $D\ind_AB$ then there is $D'\equiv_{B}D$ such that $D'\ind_AC$.

 \item Using extension and Fact \ref{e-r},
the equivalence of the following is obtained.
\be\item Any type over a set does not fork over the set.

\item For any $p\in S(A)$ there is a Morley sequence in $p$. 

\item Any consistent  formula $\varphi(x,a)$ does not fork over its finite parameter $a$.
\ee

\item (Existence) We say $T$ has {\em existence} if one of the  equivalent conditions in (5) holds.  Notice that $a\ind_MM$ holds for every model $M.$
Hence given any complete type over a model, there  is a Morley sequence in it. 

 \ee
  \end{Fact}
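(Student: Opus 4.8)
The plan is to derive each clause from the definitions of dividing and forking, together with the extraction Fact~\ref{e-r}; these are the standard pieces of the forking calculus. For the finite character (1), I would use that $\tp(A/BC)$ forks over $B$ precisely when it implies a single formula $\varphi(x,c)$ that forks over $B$, and such a $\varphi$ involves only finitely many of the variables and finitely many parameters; so any instance of forking is already witnessed by a finite $a\in A$ and a finite $c\in C$. For base monotonicity (2), the key observation is that an $AB$-indiscernible sequence is a fortiori $A$-indiscernible; hence a formula dividing over $AB$ divides over $A$ via the very same witnessing sequence, and passing to finite disjunctions, forking over $AB$ entails forking over $A$. The contrapositive is exactly $D\ind_A BC\Rightarrow D\ind_{AB}C$, and the clause on Morley sequences follows by applying this at each stage to $a_i\ind_A a_{<i}B$, the $AB$-indiscernibility being retained by hypothesis.

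For extension (4) I would use the feature that distinguishes forking from dividing. Given $D\ind_A B$, consider the partial type over $C$ consisting of $\tp(D/B)$ together with $\neg\psi(x,c)$ for every $\CL(C)$-formula $\psi$ forking over $A$. A finite fragment amounts to a formula $\theta(x,b)\in\tp(D/B)$ together with finitely many negations $\neg\psi_1,\dots,\neg\psi_k$; were $\theta\vdash\bigvee_{j\le k}\psi_j$, then $\theta$ itself would fork over $A$ (a finite disjunction of forking formulas forks), contradicting $D\ind_A B$. Hence the fragment is consistent, and by compactness a realization $D'$ gives $D'\equiv_B D$ with $\tp(D'/C)$ non-forking over $A$, i.e.\ $D'\ind_A C$.

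The substantive clause, and the step I expect to be the main obstacle, is left transitivity (3). I would phrase it as the nontrivial direction of the chain rule: if $C\ind_A B$ and $D\ind_{AC}B$ then $DC\ind_A B$. Suppose towards a contradiction that $\tp(DC/AB)$ forks over $A$, witnessed by $\varphi(x_C,x_D,b)$ with $\varphi(x_C,x_D,b)\vdash\bigvee_{i<n}\chi_i$, each $\chi_i$ dividing over $A$. Freezing the realization $c$ of $C$, the formula $\varphi(c,x_D,b)$ lies in $\tp(D/ABC)$, which is non-forking over $AC$ by hypothesis, so $\varphi(c,x_D,b)$ cannot imply a disjunction of formulas all of which divide over $AC$. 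The delicate point is that a disjunct $\chi_i$ dividing over $A$ need not divide over $AC$ once the parameter $c$ is absorbed into the base, and the index of the disjunct realized by $d$ is not under control; the argument must transfer the dividing witnesses using $C\ind_A B$. This is precisely where the real work lies, and I would either carry out this transfer directly or invoke the transitivity of forking from the standard references cited in the introduction.

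Finally, for the equivalences (5) and the remark (6): the equivalence of (a) and (c) is immediate from (2), since a formula in $p\in S(A)$ that does not fork over its finite parameters does not fork over $A$ by base monotonicity, while conversely every consistent formula extends to a complete type. The implication (b)$\Rightarrow$(a) is built into the definition, as the case $i=0$ of $A$-independence already asserts $a_0\ind_A\emptyset$, i.e.\ that $p$ does not fork over $A$; for (a)$\Rightarrow$(b) I would iterate extension (4) to produce a long $A$-independent sequence in $p$ and then extract, by Fact~\ref{e-r}, an $A$-indiscernible sequence finitely based on it, finite basedness preserving both membership in $p$ and $A$-independence. For (6), the fact $a\ind_M M$ holds unconditionally: if some $\varphi(x,m)\in\tp(a/M)$ forked over $M$ through disjuncts $\chi_i(x,c_i)$ dividing over $M$, then since $M\models\exists x\,\varphi(x,m)$ we may pick $m^\ast\in M$ with $\models\varphi(m^\ast,m)$; some $\chi_{i_0}(m^\ast,c_{i_0})$ holds, but $\chi_{i_0}(m^\ast,y)$ is over $M$ and therefore satisfied all along the $M$-indiscernible witness to its dividing, contradicting inconsistency. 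The Morley sequence over a model is then produced exactly as in (a)$\Rightarrow$(b).
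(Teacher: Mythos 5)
The paper offers no proof of this Fact: it is stated as a list of ``straightforward consequences from the definitions of dividing and forking,'' with the reader implicitly referred to the literature. Measured against that, your arguments for (1), (2), (4), (5) and (6) are the standard correct ones: finite character via compactness and the finiteness of the implied forking formula; base monotonicity from the observation that an $AB$-indiscernible sequence is $A$-indiscernible, so dividing (hence forking) over the larger base passes down to the smaller one; extension via finite satisfiability of $\tp(D/B)$ together with the negations of all $\CL(C)$-formulas forking over $A$; the equivalences in (5) from base monotonicity in both directions, the $i=0$ clause of independence, and iterated extension followed by extraction via Fact~\ref{e-r}; and $a\ind_M M$ by reflecting a realization of the forking formula into $M$ and running it along the $M$-indiscernible dividing witness.

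The one clause where your proposal stops short is (3), left transitivity, and you are right that it is the only item that does not follow by directly unwinding the definitions; the direct attack you sketch (freeze $c$ and examine the disjuncts) does run into exactly the obstruction you identify and should be abandoned rather than patched. The standard route is to prove left transitivity for dividing first, using the characterization that $\tp(X/AB)$ does not divide over $A$ iff every $A$-indiscernible sequence beginning with $B$ becomes $AX'$-indiscernible for some $X'\equiv_{AB}X$: applying this once for $C$ over $A$ and then for $D$ over $AC'$ makes the given sequence indiscernible over $AD''C'$ with $D''C'\equiv_{AB}DC$, whence $\tp(DC/AB)$ does not divide over $A$. One then transfers from dividing to forking via the equivalence that $X\ind_A B$ iff for every $\hat{B}\supseteq AB$ some $X'\equiv_{AB}X$ does not divide from $\hat{B}$ over $A$, which is a consequence of your extension argument. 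Deferring this to \cite{Kim13}, as you propose, is consistent with the paper's own treatment, but be aware that it is a genuine theorem rather than a formality.
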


 As is well-known a typical example where existence (over $\emptyset$) fails to hold  is a circle with a ternary relation $R$ on the circle  such  that
 $R(a,b, c)$ holds iff 
 $a,c$ are not antipodal and $b$ is in the smaller arc connecting $a$ and $c$. We will discuss this example again in \S\ref{discussion}.

  \begin{Definition} \label{k-fkdef}
  \be
\item A formula $\varphi(x,a_0)$ {\em Kim-divides} over $A$ if there is an $A$-Morley sequence $\la a_i\mid i<\omega\ra$ such that $\{\varphi(x,a_i)\mid i<\omega\}$ is inconsistent.  A formula {\em Kim-forks} over $A$ if the formula implies some finite disjunction of formulas, each of which Kim-divides over $A$. A type Kim-forks over $A$ if the type implies a formula which Kim-forks over $A$. 
 
  \item We write $A\indk_BC$  if   $\tp(A/BC)$ does not Kim-fork over $B$. Obviously $A\ind_BC$ implies $A\indk_BC$. 
  
   A sequence $I=\la a_i\mid i<\omega\ra$ is said to be $\indk$-{\em independent}  over $A$  if $a_i\indk_A a_{<i}$  for each $i<\omega$; if additionally $a_i\equiv_A a_0$ then we say it is $\indk$-independent (over $A$) {\em in} $\tp(a_0/A)$. 
  
\item  Given an $A$-indiscernible sequence $\la a_i\mid i<\omega\ra$ we say it is  {\em $\indk$-Morley} over $A$ (or $A$-{\em $\indk$-Morley}) if it is $\indk$-independent over $A$; and we say it is a
 {\em total $\indk$-Morley} sequence over $A$ if $a_{\geq i}\indk_A a_{< i}$  holds for each $i<\omega$. Due to base monotonicity and left transitivity,
 a Morley sequence over $A$ is a total 
 Morley sequence over $A$, so a total
 $\indk$-Morley sequence over $A$ as well.
  
 \ee
 \end{Definition}

 In the definitions above if $A=\emptyset$  then we omit to state it. For example `$\varphi(x,a)$ (Kim-)divides/forks' means it does over $\emptyset$.
 
 \begin{Remark}  \label{algmorley}  
 \be
 \item Since we do not assume existence in Definition \ref{k-fkdef},  given formulas $\varphi(x,a)\models \psi(x,b)$, it is unclear whether  the following property holds:   
 if $\psi(x,b)$ Kim-divides over $A$ then so does $\varphi(x,a)$ over $A$. However this property holds with respect to Kim-forking (note that if $\varphi(x,a)\models \psi(x,b)$ and $\psi$ implies a finite disjunction of Kim-dividing formulas, then $\varphi$ also implies it). In particular, if $p(x)\models q(x)$ and $q(x)$ Kim-forks over $A$ then
 so does $p(x)$ over $A$; Kim-forking of a type over $A$
  only depends on its solution set; and  a type $p(x)$ Kim-forks over $A$ iff  $p(x)$ implies some finite disjunction of formulas, each of which Kim-divides over $A$.

\item (Finite character for $\indk$) $A\indk_BC$ iff for any finite $a\in A$, $c\in C$ we have $a\indk_Bc$.
  
  \item (Extension for $\indk$) 
  By the same compactness argument we have the following: 
   If a partial type $p$ over $C$ does not Kim-fork over a set then there is a completion of $p$ over $C$ which does not Kim-fork over the set. 
  In particular, for $A\subseteq B\subseteq C$, if $a\indk_AB$ then there is $a'\equiv_{B}a$ such that $a'\indk_AC$. 
  
  \item By the same techniques (using (3) and Fact \ref{e-r}) the equivalence of the following can be obtained.
  
  \be\item Any type over a set does not Kim-fork over the set.

\item For any $p\in S(A)$ there is an $\indk$-Morley sequence in $p$. 

\ee
We say Kim-forking  has {\em existence} if one of the above equivalent conditions holds.

 \ee
 \end{Remark}
 
 Now we recall  a theory being (N)SOP$_1$, or simple. 
 
\begin{definition} 
We say a formula $\varphi(x,y)\in\CL$ has {\em SOP$_{1}$} if there is a tree of tuples $\{a_{\eta}\mid \eta \in 2^{<\omega}\}$ such that 
\be
\item for all $\xi \in 2^{\omega}$, $\{\varphi(x,a_{\xi \lceil \alpha}) \mid  \alpha < \omega\}$ is consistent; and 
\item for all $\eta \in 2^{<\omega}$, if $\nu \unrhd \eta^\smallfrown \langle 0 \rangle$, then $\{\varphi(x;a_{\nu}), \varphi(x;a_{\eta^\smallfrown \langle 1\rangle })\}$ is inconsistent,
\ee
where $\unlhd$ denotes the tree partial order on $2^{<\omega}$.  We say $T$ has {\em SOP$_{1}$} if some formula has SOP$_{1}$ in $T$, and say  $T$ is {\em NSOP$_{1}$} if $T$  {\em does not have} SOP$_1$. 
\end{definition}

\begin{Fact}\cite{CR16},\cite{KR20} \label{crcrit} $T$ has  SOP$_1$ iff there is a formula $\varphi(x,y)$ and an indiscernible sequence $(b_ic_i\mid i<\omega)$ 
such that   for each $i<\omega$, $b_i\equiv_{(bc)_{<i} }c_i$ (*);
 and   $\{\varphi(x, b_i)\mid i\in \omega\}$ is consistent; while 
 $\{\varphi(x, c_i)\mid i\in \omega\}$ is inconsistent.
  (Notice that by compactness the same holds even if we replace (*) by $b_i\equiv_{(bc)_{>i} }c_i$.)
\end{Fact}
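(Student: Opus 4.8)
The plan is to prove both implications of the equivalence, extracting the sequence from an SOP$_1$-tree for the forward direction and building a tree from the sequence for the converse.

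For the direction from SOP$_1$ to the sequence, I start with a formula $\varphi(x,y)$ and a tree $\{a_\eta\mid\eta\in 2^{<\omega}\}$ witnessing SOP$_1$, and first replace it by a tree that is indiscernible with respect to its tree structure (extracted by the usual Ramsey-and-compactness modeling argument). Since both defining conditions of SOP$_1$ are expressed by $2$-element inconsistencies and by consistency of finite chains, they are preserved under the modeling, so the new tree still witnesses SOP$_1$. I then read the sequence off the leftmost branch: set $b_i:=a_{0^{i+1}}$ and $c_i:=a_{0^i{}^\smallfrown\langle1\rangle}$. The set $\{\varphi(x,b_i)\mid i<\omega\}$ is consistent because it is contained in the consistent set attached to the branch $0^\omega$; and for $n<m$ one has $0^m{}^\smallfrown\langle1\rangle\unrhd 0^n{}^\smallfrown\langle0\rangle$, so the SOP$_1$ inconsistency clause applied with $\eta=0^n$ makes $\{\varphi(x,c_n),\varphi(x,c_m)\}$ inconsistent; hence $\{\varphi(x,c_i)\mid i<\omega\}$ is (even pairwise) inconsistent. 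The sequence $(b_ic_i\mid i<\omega)$ is indiscernible because all the ``node on the branch together with its right sibling'' configurations have the same quantifier-free type in the indiscernible tree. Finally, $b_i$ and $c_i$ are the two children of $0^i$, and the tree-automorphism that swaps the two subtrees hanging below $0^i$ fixes $(bc)_{<i}$ (none of whose elements lies strictly below $0^i$) and sends $b_i$ to $c_i$; this gives $b_i\equiv_{(bc)_{<i}}c_i$, i.e. (*). The variant with $b_i\equiv_{(bc)_{>i}}c_i$ then follows by the compactness remark in the statement (or by reading the branch in the reverse direction).

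For the converse I build an SOP$_1$-tree out of the sequence. By indiscernibility the inconsistency of $\{\varphi(x,c_i)\}$ is witnessed on a finite set, so after replacing $\varphi$ by a finite conjunction and passing to the corresponding ``blocked'' sequence I may assume the $c$-row is $2$-inconsistent, i.e. $\varphi(x,c_i)\wedge\varphi(x,c_j)$ is inconsistent for $i\neq j$. I then construct $(a_\eta\mid\eta\in 2^{<\omega})$ so that each subtree is a fresh realized copy of the sequence: the type-equality $b_i\equiv_{(bc)_{<i}}c_i$ is exactly what allows one, over the branch leading to a node, to graft a right child that plays the role of a ``$c$'' while the left child and its descendants continue to realize the consistent ``$b$''-row along every branch. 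The two defining properties are then read off the engineered copies: consistency of each branch reduces, through the nested type-equalities, to consistency of the $b$-row, while the inconsistency of a right sibling $a_{\eta^\smallfrown\langle1\rangle}$ with every $a_\nu$ ($\nu\unrhd\eta^\smallfrown\langle0\rangle$) is arranged to come from the $2$-inconsistency inside a single copy of the $c$-row. Compactness reduces everything to realizing the finite subtrees, which is where the nested type-equalities are used.

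The main obstacle is precisely this last construction: one must realize the whole family of copies of the sequence so that, simultaneously, every branch stays consistent and every right sibling conflicts with its entire left subtree, all while keeping a single formula of fixed arity. Keeping these two requirements compatible — consistency along branches coming from the $b$-row, and every conflict reduced to a $c$-to-$c$ $2$-inconsistency — is the technical heart of the argument, and is exactly the content carried out in \cite{CR16} and \cite{KR20}; I would organize it as an induction on the levels of the tree, maintaining at each stage a finite configuration of type-equalities that compactness then assembles into the full tree.
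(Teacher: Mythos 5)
First, note that the paper does not prove Fact \ref{crcrit}; it is quoted from \cite{CR16} and \cite{KR20}, so there is no in-paper argument to compare yours against, and I am judging the proposal on its own. Your forward direction (SOP$_1$ $\Rightarrow$ sequence) is essentially the standard argument and is correct in substance. Two small points of hygiene: the map swapping the two subtrees above $0^i$ is \emph{not} an automorphism of the tree structure once $<_{\lex}$ is in the language, so you should instead observe directly that the index tuples $(0^{i+1};\bar\eta)$ and $(0^i{}^\smallfrown\langle 1\rangle;\bar\eta)$, with $\bar\eta$ enumerating the indices of $(bc)_{<i}$, have the same quantifier-free tree type (same levels, same $\unlhd$-relations, meets and $<_{\lex}$-comparisons), which gives (*) from s-indiscernibility; and if level predicates are present, the tuples $b_ic_i$ sit at different levels, so the indiscernibility of $(b_ic_i\mid i<\omega)$ does not come for free from s-indiscernibility and needs one further extraction via Fact \ref{e-r} (all three conditions being preserved under finite basedness).

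The genuine gap is in the converse, which is in fact the only direction this paper uses (in the proof of Theorem \ref{main} the criterion is applied to conclude consistency of $\{\varphi_0(x,c_{i,1})\}$ from NSOP$_1$). Your opening reduction --- blocking the sequence to make the $c$-row $2$-inconsistent --- does not go through: for the blocked sequence one needs $b_{im}\cdots b_{im+m-1}\equiv_{(bc)_{<im}}c_{im}\cdots c_{im+m-1}$, and this block version of (*) does not follow from the pointwise hypotheses $b_j\equiv_{(bc)_{<j}}c_j$. (From $b_i\equiv_{(bc)_{<i}}c_i$ and $b_{i+1}\equiv_{(bc)_{\le i}}c_{i+1}$ one gets, e.g., $c_ib_{i+1}\equiv_{(bc)_{<i}}c_ic_{i+1}$, but nothing identifies the pair $b_ib_{i+1}$ with $c_ic_{i+1}$ over $(bc)_{<i}$; indeed in the motivating examples the $c$'s are pairwise ``collapsing'' while the $b$'s are pairwise generic, so the pair types genuinely differ.) In \cite{CR16} the ``inconsistent'' and ``$2$-inconsistent'' array conditions are proved equivalent only by passing through the SOP$_1$ tree itself, so your reduction presupposes what is to be shown. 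After this step the remaining tree construction --- the actual content of the implication --- is explicitly deferred to \cite{CR16},\cite{KR20} rather than carried out, so the converse direction is not established by the proposal as written.
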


 We say $T$ is {\em simple} if $\ind$ satisfies local character: For any set $A$ and finite  $a$, there is $A_0\subseteq A$ with $|A_0|\leq |T|$ such that $a\ind_{A_0}A$. 
 As is well known,
  in simple $T$, existence holds; and  dividing, forking, and Kim-forking are all equal over any set. Any simple $T$ is NSOP$_1$. 
   It is not known yet whether existence holds in all NSOP$_1$ theories. In this note, however,  we show that Kim-forking satisfies existence in every NSOP$_1$ theory. 
 We will only  use the following two facts holding  in any NSOP$_1$ theory.

 \begin{Fact} \label{ess.facts} \cite{KR20}
 Assume $T$ is NSOP$_1$.  
  \be
\item  A formula Kim-divides over a model $M$ iff the formula Kim-forks over $M$. Moreover,   $c\indk_MM$  for any model $M$ (since $c\ind_MM$).
  
\item (Kim's lemma for Kim-dividing  over a model) A formula $\varphi(x,a_0)$ Kim-divides over a model $M$ iff for any Morley sequence $\la a_i\mid i<\omega\ra$ over $M$,  $\{\varphi(x,a_i)\mid i<\omega\}$ is inconsistent. 
 \ee
 \end{Fact}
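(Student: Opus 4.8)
\emph{Plan.} The whole statement rests on the hard direction of part (2), Kim's lemma, so I would establish that first and deduce everything else from it. Part (1)'s ``moreover'' is immediate: $c\ind_M M$ for every model $M$ by Fact~\ref{basicsoffking}(6), and $\ind$ implies $\indk$ by Definition~\ref{k-fkdef}(2), so $c\indk_M M$. For ``$\varphi(x,a_0)$ Kim-divides over $M$ iff it Kim-forks over $M$,'' the forward implication is trivial, since a Kim-dividing formula implies the one-term disjunction consisting of itself. The converse I would reduce to Kim's lemma: suppose $\varphi(x,a)\models\bigvee_{j<n}\psi_j(x,b_j)$ with each $\psi_j(x,b_j)$ Kim-dividing over $M$. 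Choose a global $M$-invariant (coheir) type extending $\tp(a b_0\cdots b_{n-1}/M)$ and let $\la (a^i,b_0^i,\dots,b_{n-1}^i)\mid i<\omega\ra$ be a Morley sequence it generates; each coordinate projection $\la b_j^i\mid i<\omega\ra$ is then a Morley sequence over $M$, so by Kim's lemma each $\{\psi_j(x,b_j^i)\mid i<\omega\}$ is $k_j$-inconsistent for some finite $k_j$. Since $\varphi(x,a^i)\models\bigvee_j\psi_j(x,b_j^i)$ for every $i$, a realization of $\{\varphi(x,a^i)\mid i<\omega\}$ would, by pigeonhole, satisfy $\psi_{j^*}(x,b_{j^*}^i)$ for some fixed $j^*$ and infinitely many $i$, contradicting $k_{j^*}$-inconsistency. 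Hence $\{\varphi(x,a^i)\}$ is inconsistent, and as $\la a^i\ra$ is Morley over $M$ with $a^i\equiv_M a$, the formula $\varphi(x,a)$ Kim-divides over $M$.

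It remains to prove part (2). The direction ``$\Leftarrow$'' is trivial: existence holds over models (Fact~\ref{basicsoffking}(6)), so some Morley sequence over $M$ in $\tp(a_0/M)$ exists, and by hypothesis it witnesses inconsistency, i.e.\ $\varphi(x,a_0)$ Kim-divides. For the substantial direction ``$\Rightarrow$'' I would argue by contraposition and produce an SOP$_1$ pattern. Thus assume there are two Morley sequences over $M$ in $\tp(a_0/M)$, say $\la c_i\ra$ with $\{\varphi(x,c_i)\}$ inconsistent and $\la b_i\ra$ with $\{\varphi(x,b_i)\}$ consistent; the goal is to contradict NSOP$_1$ through the criterion of Fact~\ref{crcrit}, i.e.\ to build an $M$-indiscernible sequence $\la b_i'c_i'\mid i<\omega\ra$ with $b_i'\equiv_{(b'c')_{<i}}c_i'$, with $\{\varphi(x,b_i')\}$ consistent and $\{\varphi(x,c_i')\}$ inconsistent.

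To construct this array I would realize the inconsistent side as a coheir Morley sequence, extending $\tp(a_0/M)$ to a global $M$-finitely-satisfiable type $q$ and generating $\la c_i'\ra$ from $q$; its $M$-invariance is what secures the type-over-the-past equality $b_i'\equiv_{(b'c')_{<i}}c_i'$ once the alternative ``consistent'' elements $b_i'$ are chosen to have the same type over $Mc'_{<i}$ as $c_i'$ while still carrying the consistent behaviour of $\la b_i\ra$. Having arranged these finitary requirements along a long enough sequence, I would apply Fact~\ref{e-r} to extract an $M$-indiscernible sequence finitely based on it, so that indiscernibility is gained while the consistency of $\{\varphi(x,b_i')\}$, the $k$-inconsistency of $\{\varphi(x,c_i')\}$, and the equalities $b_i'\equiv_{(b'c')_{<i}}c_i'$ all persist; Fact~\ref{crcrit} then yields SOP$_1$.

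The main obstacle is exactly this last construction: simultaneously (i) keeping, at each stage $i$, the ``consistent'' element $b_i'$ and the ``inconsistent'' element $c_i'$ of the same type over the whole past $(b'c')_{<i}$, and (ii) preserving the split between consistency of $\varphi$ along the $b'$-side and inconsistency along the $c'$-side. Condition (i) is not a consequence of mere indiscernibility, and this is the reason a global invariant type, rather than an arbitrary Morley sequence, must drive the construction; reconciling (i) with (ii) is precisely the place where the absence of the Fact~\ref{crcrit} pattern, i.e.\ NSOP$_1$, is used, and it is the technical heart of Kim's lemma.
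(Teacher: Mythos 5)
This statement is imported from \cite{KR20} and the paper supplies no proof of it, so there is no internal argument to compare against; I can only assess your sketch on its own terms. Your reductions are correct and follow the standard route: the ``moreover'' of (1) is immediate from Fact \ref{basicsoffking}(6) and the fact that $\ind$ implies $\indk$; the equivalence of Kim-forking and Kim-dividing over $M$ follows from Kim's lemma by taking a coheir Morley sequence in the concatenated tuple $ab_0\cdots b_{n-1}$, projecting, invoking $k_j$-inconsistency, and applying pigeonhole (note that a global type finitely satisfiable in $M$ does not fork over $M$, so the projections really are $\ind$-Morley over $M$); and the easy direction of (2) follows from existence over models. All of that is fine.

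The genuine gap is the hard direction of (2), which you describe but do not prove and which you yourself flag as ``the technical heart.'' Two points. First, the construction you outline --- drive the inconsistent side by a global $M$-finitely satisfiable type and choose each $b_i'$ to agree in type with $c_i'$ over the past while ``still carrying the consistent behaviour'' --- is a restatement of what must be shown, not an argument: producing at every stage an element $b_i'$ with $b_i'\equiv_{(b'c')_{<i}}c_i'$ \emph{and} with $\{\varphi(x,b_j')\mid j\le i\}$ consistent is exactly the difficulty, and in \cite{KR20} it is resolved by an inductive tree construction (paths governed by one sequence, siblings by the other) combined with the modeling property for s-indiscernible trees, not by a direct interleaving. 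Second, and more seriously, the Fact as stated concerns arbitrary $\ind$-Morley sequences over $M$: the given inconsistency witness $\la c_i\mid i<\omega\ra$ is only assumed to be a nonforking Morley sequence, so you cannot simply ``realize the inconsistent side as a coheir Morley sequence'' without first proving that inconsistency along some nonforking Morley sequence transfers to an invariant one. That transfer is precisely the content of \cite[Theorem 7.7]{KR20} (as opposed to the invariant-sequence Kim's lemma proved earlier in that paper), and it needs its own tree argument. As written, your proposal at best reduces the Fact to the two main theorems of \cite{KR20} rather than proving it.
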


We state the following folklore which comes mainly  from  extension.

\begin{fact}  \label{indextflore}  
Given a cardinal  $\kappa$, let $I=\la a_i\mid i<\kappa\ra$ be $\ind$-independent over $(A,B)$. 
\be\item For any $C$ there is $C'\equiv_{AB} C$ such that 
$I$ is independent over $(A,BC')$. It follows that $I\ind_ABC'$ and $I$ is $ABC'$-independent. 

\item Suppose additionally  $\kappa$ is infinite and  $I$ is Morley over $(A,B)$. Then again for  any $C$, there is $C'\equiv_{AB}C$ such that $I$ is Morley over $(A,BC')$.

\ee
\end{fact}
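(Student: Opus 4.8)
The plan is to derive both parts from Extension (Fact~\ref{basicsoffking}(4)) together with an automorphism transfer, invoking Fact~\ref{e-r} only to secure indiscernibility in part~(2). For (1) I first note that the two ``It follows'' clauses are formal consequences of the main assertion: if $I$ is independent over $(A,BC')$, i.e. $a_i\ind_A a_{<i}BC'$ for all $i<\kappa$, then base monotonicity gives $a_i\ind_{ABC'}a_{<i}$ (so $I$ is $ABC'$-independent), while a transfinite induction using left transitivity and finite character gives $a_{<j}\ind_A BC'$ for all $j$, hence $I\ind_A BC'$. So it suffices to produce $C'\equiv_{AB}C$ with $I$ independent over $(A,BC')$. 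Rather than moving $C$ directly, I build a copy $I'=\la a_i'\mid i<\kappa\ra\equiv_{AB}I$ that is independent over $(A,BC)$ with $C$ held \emph{fixed}, and then transfer: choosing $\sigma\in\Aut(\CM/AB)$ with $\sigma(I')=I$ and setting $C':=\sigma(C)$ yields $C'\equiv_{AB}C$ and, applying $\sigma$, that $I$ is independent over $(A,BC')$.

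To build $I'$ I recurse on $i$, maintaining $\la a_j'\mid j\le i\ra\equiv_{AB}\la a_j\mid j\le i\ra$ and $a_j'\ind_A a_{<j}'BC$ for all $j\le i$. At stage $i$ I pick $\tau\in\Aut(\CM/AB)$ with $\tau(a_{<i})=a_{<i}'$ and put $\tilde a_i:=\tau(a_i)$; then $\tilde a_i\ind_A a_{<i}'B$ is the $\tau$-image of $a_i\ind_A a_{<i}B$, so Extension provides $a_i'\equiv_{Aa_{<i}'B}\tilde a_i$ with $a_i'\ind_A a_{<i}'BC$, and $\tp(a_{<i}'a_i'/AB)=\tp(a_{<i}'\tilde a_i/AB)=\tp(a_{<i}a_i/AB)$ preserves the type equality over $AB$. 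Limit stages are unions, using that the type of a tuple over $AB$ is determined by the types of its finite subtuples. Note that no indiscernibility is needed here.

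For (2) the extra demand is $ABC'$-indiscernibility, which is not a type-definable condition, so part~(1) cannot simply be compactified. I would first establish a stabilizing lemma: any $AB$-indiscernible sequence with the same EM-type over $AB$ as $I$ is again independent over $(A,B)$. Indeed, for an increasing tuple the relation $a_i\ind_A a_{i_0}\cdots a_{i_{n-1}}B$ depends only on $\tp(a_i a_{i_0}\cdots a_{i_{n-1}}B/A)$ (invariance of forking under $A$-automorphisms), and by $AB$-indiscernibility this type agrees with the corresponding one in $I$, where independence holds; finite character then gives $a_i\ind_A a_{<i}B$ for all $i$. This lets me stretch $I$ to an $AB$-indiscernible, hence (by the lemma) Morley over $(A,B)$, sequence $\hat I=\la a_i\mid i<\lambda\ra$ with $\lambda$ as provided by Fact~\ref{e-r} for the relevant cardinalities. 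I now run the recursion of part~(1) on $\hat I$ (with $C$ fixed) to obtain $\hat I'\equiv_{AB}\hat I$ independent over $(A,BC)$; since $\hat I'\equiv_{AB}\hat I$ and $\hat I$ is $AB$-indiscernible, $\hat I'$ is $AB$-indiscernible as well. Applying Fact~\ref{e-r} over $ABC$ I extract an $ABC$-indiscernible sequence $\la d_i\mid i<\kappa\ra$ finitely based on $\hat I'$ over $ABC$ (taking the length to be $\kappa$ by stretching the extracted infinite indiscernible sequence). By finite-basedness each increasing tuple satisfies $d_{i_0}\cdots d_{i_n}\equiv_{ABC} a'_{j_0}\cdots a'_{j_n}$ for some $j_0<\cdots<j_n$, so $a'_{j_n}\ind_A a'_{j_0}\cdots a'_{j_{n-1}}BC$ transfers to $d_{i_n}\ind_A d_{i_0}\cdots d_{i_{n-1}}BC$ by automorphism-invariance of forking; hence $\la d_i\ra$ is independent over $(A,BC)$ and $ABC$-indiscernible, i.e. Morley over $(A,BC)$, while its EM-type over $AB$ equals that of $I$, giving $\la d_i\ra\equiv_{AB}I$. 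A final $\sigma\in\Aut(\CM/AB)$ with $\sigma(\la d_i\ra)=I$ and $C':=\sigma(C)$ transfers ``Morley over $(A,BC)$'' to ``$I$ Morley over $(A,BC')$'', with $C'\equiv_{AB}C$.

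The main obstacle I anticipate is precisely the gap flagged at the start: per-element independence over $(A,BC')$ is strictly stronger than the bulk statement $I\ind_A BC'$ (the base differs), so the construction must reproduce the entire independence profile of the sequence, not merely $I\ind_A BC'$. In (2) one must moreover secure indiscernibility over the enlarged base \emph{simultaneously} with independence; since non-forking is not type-definable, a direct one-step extension is unavailable, which forces the detour through a long copy and the extraction in Fact~\ref{e-r}, together with the observation that finite-basedness preserves both the per-element independence and the EM-type.
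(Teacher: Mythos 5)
Your proposal is correct and follows essentially the same route as the paper: part (1) is proved by the same recursion (transfer the next element by an $AB$-automorphism, apply Extension to make it independent from $BC$ with $C$ held fixed, then move $C$ to $C'$ by a final $AB$-automorphism), and part (2) by the same detour through a long Morley copy, extraction of an indiscernible sequence via Fact~\ref{e-r}, transfer of the per-element independence through finite-basedness, and a closing automorphism. The only differences are cosmetic (e.g.\ the paper stretches only $I_\omega$ and re-extends to length $\kappa$ by compactness at the end, and leaves your ``stabilizing lemma'' implicit in the phrase ``still Morley over $(A,B)$'').
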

\begin{proof} (1) It suffices to verify the first statement, since it  implies  the second one due to base monotonicity and left transitivity. Given
$i<\kappa$, inductively assume that we have found $(b_j\mid j<i)$ such that $b_{<i}\equiv_{AB}a_{<i}$ and
$(b_j\mid j<i)$ is independent over $(A, BC)$ (*). It is enough (due to an $AB$-automorphism)  to find $b_i$ such that $b_{\leq i}\equiv_{AB}a_{\leq i}$ and
$(b_j\mid j\leq i)$ is independent over $(A, BC)$.  Now by (*),  there clearly  is  $b'_i$ such that $b_{<i}b'_i\equiv_{AB} a_{<i} a_i $,
so that $b'_i\ind_Ab_{<i}B$. Then by extension  there is  $b_i\equiv_{Ab_{<i}B} b'_i$ such that $b_i\ind_Ab_{<i}BC$.   
As wanted, $b_{\leq i}\equiv_{AB}b_{<i}b'_i\equiv_{AB}a_{\leq i}$ and $b_{\leq i}$ is independent over $(A,BC)$.

\medskip

(2) Due to compactness, we can stretch  out $I_\omega:=\la a_i\mid i<\omega\ra(\subseteq I)$ to  $I_\lambda=\la a_i\mid i<\lambda\ra$ for some sufficiently large $\lambda$ (to apply 
Fact \ref{e-r}), which is still Morley over $(A, B)$.  Then by (1), there  at least  is $C_0\equiv_{AB}C$ such that $I_\lambda$ is independent over $(A, BC_0)$. 
Now we apply Fact \ref{e-r} to obtain $ABC_0$-indiscernible  $J=(b_i\mid i<\omega)$ finitely based on $I_\lambda$ over $ABC_0$, so that clearly 
$J$ is Morley over $(A, BC_0)$.  Notice that $J\equiv_{AB} I_{\omega}$, since $I_\omega$ is $AB$-indiscernible. Hence there is  $C_1$
such that $C_1\equiv_{AB}C_0$ and $I_\omega$ is Morley over $(A, BC_1)$. Again by compactness we can extend $I_\omega$ to $I'$ of length $\kappa$ such that
$I'$ is Morley over $(A, BC_1)$. It also follows $I\equiv_{AB}I'$ and by an $AB$-automorphism we find a desired $C'\equiv_{AB}C_1$ such that
the original $I$ is Morley over    $(A, BC')$.
\end{proof}

Throughout this note we give the lexicographic order to $\mu\times\lambda$ so that for an array 
$(c_{ij}\mid (i,j)\in \mu\times \lambda)$, $c_{<ij}$ means $\{c_{i'j'}\mid i'<i \text{, or } (i'=i \text{ and } j'<j  )  \}$, and so on.

\begin{definition} Assume an array   $I=(c_{ij}\mid i,j\in \omega)$ is given with $I_i:=(c_{ij}\mid j\in \omega)$.
\be\item Recall that  $I$ is said to be {\em strongly  indiscernible} over $A$ 
if  $(I_i\mid i<\omega)$ is $A$-indiscernible and each $I_i$ is 
   $AI_{\ne i}$-indiscernible.

\item  $I$  is said to be a  {\em Morley array} over $A$ if it is strongly indiscernible over $A$, and 
 $c_{ij}\ind_Ac_{<ij}$ holds for each  $(i,j)\in \omega\times \omega$.
 \item $I$  is  a {\em weak* Morley array}  over $A$ if $c_{ij}\ind_Ac_{<ij}$ holds for each  $(i,j)\in \omega\times \omega$,
 each $I_i$ is $A$-indiscernible;  and if additionally   $(I_i\mid i\in \omega)$ is an $A$-Morley sequence then we call $I$ a {\em weak Morley array} over $A$.

 \ee
\end{definition}

If  $I$  is a Morley array over $A$, then mainly by left transitivity  and base monotonicity,
$I$ is  a weak Morley array.

\section{Preliminaries}

In this section we will present a couple of key lemmas critical in showing the main result Theorem  \ref{main}. 
Sophisticated applications of array and tree indiscernibilities keeping $\ind$-independence  
enable us to prove those lemmas.

\begin{Lemma} \label{gettingwmly}
Assume that $I=(c_{ij}\mid i,j\in \omega)$ is an  $A$-Morley array. Then for any $(A\subseteq)B$, there is $I'\equiv_AI$ such that 
$I'$ is a weak $B$-Morley array.
\end{Lemma}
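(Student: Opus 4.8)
The plan is to secure the three defining features of a weak $B$-Morley array --- lexicographic $\ind$-independence over $B$, $B$-indiscernibility of each row, and the rows forming a $B$-Morley sequence --- in two separate layers: all the \emph{independence} over $B$ will be produced by a single base extension, while all the \emph{indiscernibility} over $B$ will be extracted afterwards by Erd\H{o}s--Rado, arranged so that finite-basedness transports the independence along untouched. Observe at the outset that, reading $I$ lexicographically, the relations $c_{ij}\ind_A c_{<ij}$ say precisely that $I$ is $\ind$-independent over $(A,\emptyset)$, and building up along a row they also give that each row is $A$-indiscernible and $\ind$-independent.

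First I would stretch $I$ to a long $A$-Morley array $\hat I=(c_{ij}\mid i,j<\lambda)$ for $\lambda$ large enough to feed Fact \ref{e-r}; this is possible by compactness, since strong indiscernibility and the lexicographic relations are first-order on finite sub-arrays. Viewing $\hat I$ lexicographically, Fact \ref{indextflore}(1), applied with its second base empty and with its ``$C$'' instantiated to our $B$, produces $B'\equiv_A B$ over which $\hat I$ is independent; in particular $\hat I\ind_A B'$ and $c_{ij}\ind_{B'}c_{<ij}$ for all $(i,j)$. An $A$-automorphism carrying $B'$ to $B$ then yields a long $A$-Morley array $\hat K\equiv_A I$ with $\hat K\ind_A B$ and, crucially, $c_{ij}\ind_B c_{<ij}$ throughout; building up along a row this already gives $K_i\ind_B K_{<i}$. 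This settles every independence demand, and what remains is purely a matter of indiscernibility over $B$.

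Next I would install the two indiscernibilities by two applications of Fact \ref{e-r} over $B$. To make every row $B$-indiscernible \emph{simultaneously}, I would apply Fact \ref{e-r} not to the rows but to the sequence of \emph{columns} $d_j=(c_{ij}\mid i<\lambda)$, extracting a $B$-indiscernible sequence of columns finitely based on $(d_j\mid j<\lambda)$ over $B$: since any finite tuple of columns determines, for each fixed $i$, the corresponding finite tuple of entries of row $i$, $B$-indiscernibility of the column sequence forces each row to be $B$-indiscernible at once. I would then apply Fact \ref{e-r} a second time to the still $\lambda$-long sequence of rows, obtaining a $B$-indiscernible sequence of rows $(I'_i\mid i<\omega)$; each new row is $\equiv_B$ to an earlier one, and since $B$-indiscernibility of a sequence is an invariant of its type over $B$, the rows stay $B$-indiscernible. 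Both extractions are order-preserving and finitely based over $B$, so by finite character of $\ind$ (Fact \ref{basicsoffking}(1)) and the invariance of $\ind_B$ under $\equiv_B$, every finite instance of $c'_{ij}\ind_B c'_{<ij}$ and of $I'_i\ind_B I'_{<i}$ is copied from a lexicographically matched instance in $\hat K$; hence $I'$ is a weak $B$-Morley array. For $I'\equiv_A I$ I would note that finite-basedness over $B\supseteq A$ is in particular a shape-preserving finite-basedness over $A$ on $\hat K\equiv_A\hat I$, so each finite sub-array of $I'$ carries the $A$-type of an identically shaped sub-array of the strongly indiscernible $\hat I$; as all such sub-arrays of $\hat I$ share one $A$-type, $I'$ inherits strong indiscernibility over $A$ and the full array EM-type of $I$ over $A$.

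The main obstacle I anticipate is exactly this cross-dimensional bookkeeping: verifying that the column-extraction and the row-extraction can be run consecutively without the second undoing the gains of the first (here the key point is that a type-invariant property, such as $B$-indiscernibility of a row, survives the later $\equiv_B$-matching), and that the finite-basedness is genuinely two-dimensional and order-preserving. Concretely, one must check that fixing the rows while extracting columns order-preservingly (and then fixing the columns while extracting rows) leaves the lexicographic order among any matched finite configuration intact, so that the infinitary condition $c_{ij}\ind_B c_{<ij}$ really does reduce, via finite character, to finitely many independence instances already present in $\hat K$.
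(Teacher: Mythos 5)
Your argument is correct, but it is organized quite differently from the paper's proof. The paper runs a transfinite induction on the rows, interleaving the two tools at every stage: for row $i$ it re-realizes the type of that row over the previously built rows $J_{<i}$, applies Fact \ref{indextflore}(1) with an $AJ_{<i}$-automorphism to get $d_{ij}\ind_B J_{<i}d_{i,<j}$, and only then extracts a row indiscernible over $J_{<i}B$ by Fact \ref{e-r}; a final extraction on the sequence of rows finishes. You decouple the two concerns entirely: one application of Fact \ref{indextflore}(1) to the whole array read in the order $<_{\lex}$ secures $c_{ij}\ind_B c_{<ij}$ everywhere at once, and the two indiscernibilities are then installed by two order-preserving extractions --- first on columns, which (as you correctly observe) makes every row $B$-indiscernible simultaneously, then on rows. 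The transport of the independence through the extractions is sound: by finite character one only needs finitely many instances, and the finite-basedness matching fixes the row index while increasing in the column index (respectively, fixes columns while increasing rows), so lexicographic precedence is preserved and invariance of $\ind_B$ under $\equiv_B$ does the rest; the final step $I'\equiv_A I$ via strong indiscernibility is the same as the paper's. What your route buys is the elimination of the transfinite induction and of the repeated re-realization of types over previously built rows; what it costs is two points of bookkeeping that you should make explicit. First, a single cardinal $\lambda$ for both dimensions does not work: the columns you feed to Fact \ref{e-r} are tuples whose length is the number of rows, so the number of columns must be $\lambda(\mu+|B|)$ where $\mu$ is the number of rows (the paper's asymmetric choice $\mu=\lambda(|B|+|I|)$, $\kappa=\lambda(\mu)$ serves verbatim). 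Second, Fact \ref{indextflore}(1) is stated for sequences indexed by a cardinal, while the lexicographic reading of the array is indexed by an ordinal; the inductive proof of that fact goes through for arbitrary ordinal length, but this needs to be said.
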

\begin{proof}  
We  choose  $\mu,\kappa$ to  be the cardinals $\lambda(|B|+|I|)$  
and $\lambda(\mu)$, respectively,  as described in Fact \ref{e-r}. By compactness we can stretch out  $I$  to  $(c_{ij}\mid i\in \mu, \ j\in \kappa )$  
while keeping strong  indiscernibility over $A$. Then it must be $A$-Morley as well  (*).   
By induction on $\mu$ we will construct the following sequences.

\medskip

\noindent {\em Claim \ref{gettingwmly}.1.}  For each $i<\mu$ there is $J_i=(c'_{mj}\mid m\leq  i;\  j\in \omega)$ such that:
\be\item[(i)]    $J_i\equiv_A(c_{mj}\mid  m\leq i;\  j\in \omega)$ and for $i'<i$,  $J_{i'}\subseteq J_i$;

\item[(ii)]  $J_i$ is a weak* $B$-Morley array. 
\ee

\medskip

\noindent {\em Proof of Claim \ref{gettingwmly}.1.}  Given $i<\mu$, inductively assume that  for all $i'<i$, we have found $J_{i'}$ satisfying (i),(ii). 
Now put $J_{<i}:=\bigcup_{i'<i} J_{i'}=(c'_{mj}\mid m<i,\ j<\omega)$ (so if $i$ is a successor then $J_{<i}=J_{i-1}$). 
Then due to (i), there is $J'_i=(d'_{ij}\mid j<\kappa)$ such that $J_{<i}J'_i\equiv_A(c_{mj}\mid  m< i,\  j\in \omega)(c_{ij}\mid j\in\kappa )$.

In particular, due to (*) above,  $d'_{ij}\ind_Ad'_{i,<j}J_{<i}$ holds for each $j<\kappa$, that is 
$J'_i$ is independent over $(A, J_{<i})$.  Thus  by Fact \ref{indextflore}(1) with an $AJ_{<i}$-automorphism, 
there is $J^*_i=(d_{ij}\mid j<\kappa)$  such that  $J^*_i\equiv_{J_{<i}A} J'_i$ (so that 
 $J_{<i}J^*_i\equiv_A(c_{mj}\mid  m\in i,\  j\in \omega)(c_{ij}\mid j\in\kappa )$ as well (**)), while  $d_{ij}\ind_BJ_{<i}d_{i,<j}$  ($\dag_j$) holds for each $j<\kappa$. 

Finally, due to our choice of $\kappa$, we  can apply Fact \ref{e-r} to produce a desired $L=(c'_{ij}\mid j<\omega)$  which is finitely based on $J^*_i$ over $J_{<i}B$. 
Then due to (**) and strong indiscernibility over $A$,  clearly  $J_i:=J_{<i}L $ 
satisfies (i). Moreover $L$ is $B$-indiscernible, thus by  ($\dag_j$)   and  the induction hypothesis for $J_{<i}$, 
 (ii) for $i$ holds  too. 
\qed

\medskip

We put $J:=\bigcup_{i<\mu} J_i=((c'_{ij}\mid j\in \omega)\mid i\in \mu)$ as in Claim \ref{gettingwmly}.1. Owing to the choice of $\mu$, we can use  Fact \ref{e-r} to find 
 $B$-indiscernible $I' =((c''_{ij}\mid j\in \omega)\mid i\in \omega)$ finitely based on $J$ over 
$B$.  As wanted,    $I'\equiv_A I$ because of strong   indiscernibility. Now  by left transitivity and base monotonicity, $I'$ is a weak $B$-Morley array.
\end{proof}

We begin to recall the notions of  trees and tree indiscernibility  from \cite{KKS14},\cite{KR20},\cite{DKR22}.
 In this paper, by a {\em tree} we mean a set $\CT$ with a partial order $\unlhd$ such that for every $x\in \CT$, the set $\{y \mid y \unlhd x\}\subseteq \CT$ is linearly ordered by $\unlhd$.  For any $x,y\in\CT$, if there is a $\unlhd$-greatest element $z  \unlhd x,y$,  then  we call $z$ the {\em meet} of $x$ and $y$, and write it as $x\wedge y$.  Trees in this paper also possess suitable notions of {\em level} and $<_{\lex}$. For example  for  ordinals $\alpha, \gamma$,  we can see  $\gamma^{\leq \alpha}$ as a tree in the language $L_{s,\alpha}:=\{ \unlhd, \wedge,<_{\lex}, (P_{\beta})_{\beta \leq \alpha} \}$  with the usual interpretations of 
 $\unlhd, \wedge,<_{\lex}$, and  $P_{\beta}=\{\eta\in \gamma^{\leq \alpha}\mid \dom(\eta)=\beta \} $, the set of elements with level $\beta$.
 In the rest,   all the  $L_{s,\alpha}$-structures  we will consider are only trees.

\begin{defn}  Suppose that  an $L_{s,\alpha}$-structure $I$ is given. 
\be\item
We say $(a_{i} : i \in I)$ from $\CM$ is {\em s-indiscernible} over $A$ if whenever 
$(s_{0}, \ldots, s_{n})$, $(t_{0}, \ldots, t_{n})$ are tuples from $I$ with 
$$
\text{qftp}_{L_{s,\alpha}}(s_{0}, \ldots, s_{n}) = \text{qftp}_{L_{s,\alpha}}(t_{0}, \ldots, t_{n}),
$$
then we have
$a_{s_{0}}\ldots a_{s_{n}}\equiv_A a_{t_{0}}\ldots a_{t_{n}}.$
\item
We say $(b_i\mid i\in I)$ is {\em  locally based} on $(a_{i} : i \in I)$ over $A$ if 
given any finite set of formulas $\Delta$ from $\CL(A)$ and a finite tuple $(t_{0}, \ldots, t_{n})$ from  $I$, there is a tuple $(s_{0}, \ldots, s_{n})$ from $I$ such that 
\[
\text{qftp}_{L_{s,\alpha}} (t_{0}, \ldots, t_{n}) =\text{qftp}_{L_{s,\alpha}}(s_{0}, \ldots , s_{n})
\]
and 
\[\text{tp}_{\Delta}(b_{t_{0}}\ldots b_{t_{n}}) = \text{tp}_{\Delta}(a_{s_{0}} \ldots a_{s_{n}}).\]
\item We say $I$-indexed trees have the {\em  modeling property} if given  any tree $(a_{i} : i \in I)$ indexed by $I$ and any set $A$ there is s-indiscernible $(b_{i} : i \in I)$
over $A$ locally based on  $(a_{i} : i \in I)$ over $A$.
\ee
\end{defn}

\begin{fact}\cite[Theorem 4.3]{KKS14}\label{modeling} 
In any $T$, 
every $\omega^{<\omega}$-indexed tree has the modeling property, where  $\omega^{<\omega}$ is considered  as an $L_{s,\omega}$-structure.
\end{fact}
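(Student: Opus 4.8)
The plan is to derive the modeling property from a structural Ramsey theorem for finite trees via a standard Ehrenfeucht--Mostowski-style compactness construction. Write $I=\omega^{<\omega}$, regarded as an $L_{s,\omega}$-structure, and fix both a tree $(a_i\mid i\in I)$ and a set $A$. The object $(b_i\mid i\in I)$ must be built so that for every finite tuple $(t_0,\dots,t_n)$ from $I$ the type $\tp_A(b_{t_0}\dots b_{t_n})$ depends only on $\operatorname{qftp}_{L_{s,\omega}}(t_0,\dots,t_n)$ (s-indiscernibility) and is realized by some actual $a$-configuration of the same shape (local basing). By compactness it suffices to meet, for each finite $\Delta\subseteq\CL(A)$ and each finite set $F$ of quantifier-free $L_{s,\omega}$-types of tuples from $I$, the corresponding finite requirement using genuine configurations among the $a_i$; the full s-indiscernible, locally based tree is then extracted as the limit of these finite approximations.

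The combinatorial heart is to satisfy each such finite requirement, and this is where I would invoke Milliken's tree theorem. For a single shape $\tau$ and finite $\Delta$, color each copy of $\tau$ inside $I$ --- that is, each tuple $(s_0,\dots,s_n)$ with $\operatorname{qftp}_{L_{s,\omega}}(s_0,\dots,s_n)=\tau$ --- by $\tp_\Delta(a_{s_0}\dots a_{s_n})$. This is a finite coloring of the copies of a fixed finite tree, so Milliken's theorem produces a strong subtree of $I$ of height $\omega$ on which the color is constant. Applying this finitely many times, once for each shape in $F$ and passing to a further strong subtree each time, yields a single strong subtree $S\subseteq I$, again isomorphic to $\omega^{<\omega}$, on which every shape in $F$ has constant $\Delta$-type. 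Reading the data off along the embedding $\iota\colon\omega^{<\omega}\to S$, i.e. setting $b_i:=a_{\iota(i)}$, then meets the $(\Delta,F)$-requirement, and the final compactness step assembles all these into the desired $(b_i\mid i\in I)$. (A fusion/diagonalization over the countably many pairs $(\tau,\Delta)$ to obtain one limit strong subtree is an alternative to the compactness assembly, but it requires the extra care of keeping the nested intersection infinitely branching, which the compactness route sidesteps.)

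The step I expect to be the main obstacle is the interface between the tree combinatorics and the language $L_{s,\omega}$, especially the level predicates $P_\beta$. A strong subtree of $\omega^{<\omega}$ generally places its relative-level-$k$ nodes at various absolute levels, so $\iota$ need not preserve $P_\beta$ literally; what must be checked is that $\iota$ is an $L_{s,\omega}$-embedding once the levels of $S$ are renormalized, so that tuples of equal quantifier-free $L_{s,\omega}$-type are sent to genuine copies of a common shape in the original $I$. This uses that a strong subtree carries a fixed increasing level map, making equal relative levels correspond to equal absolute levels, while $\wedge$, $\unlhd$ and $<_{\lex}$ are preserved by strong embeddings. One must also confirm that Milliken's theorem is applied in a form valid for the infinitely branching $\omega^{<\omega}$ --- either the infinitary version directly or a reduction to the finitely branching case with compactness --- and that ``same shape'' coincides exactly with ``same quantifier-free $L_{s,\omega}$-type,'' so that homogenizing over all shapes is precisely what s-indiscernibility requires. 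Once this bookkeeping is settled, the verification that the resulting tree is s-indiscernible and locally based is routine.
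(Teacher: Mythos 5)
First, a point of comparison: the paper does not prove this statement at all --- it is imported verbatim as \cite[Theorem 4.3]{KKS14} --- so your proposal is a reconstruction of an external proof, and it takes a different route from the cited one. Unfortunately the route has a gap that is not mere bookkeeping, and it is exactly at the place you flag as ``the main obstacle.'' Local basedness, as defined in this paper, requires that for each finite tuple $\bar t$ and finite $\Delta\subseteq\CL(A)$ there be a witness $\bar s$ with $\text{qftp}_{L_{s,\omega}}(\bar s)=\text{qftp}_{L_{s,\omega}}(\bar t)$, and $L_{s,\omega}$ contains the level predicates $P_\beta$; so $\bar s$ must occupy \emph{exactly the same levels} as $\bar t$. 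If you set $b_i:=a_{\iota(i)}$ for a strong-subtree embedding $\iota$ with level set $\{l_0<l_1<\cdots\}$, the only configurations whose $\Delta$-types you have homogenized are those inside $S$, which sit at the levels $l_k$ rather than $k$; Milliken gives no control over configurations at the original levels, so the finite local-basedness requirement (which, after the standard reduction, is a finite disjunction over the $\Delta$-types realized by tuples of the \emph{same} $L_{s,\omega}$-shape, levels included) need not be satisfiable by the homogeneous value. Concretely, $P_0=\{\emptyset\}$ is a singleton, so local basedness forces $\tp_\Delta(b_\emptyset)=\tp_\Delta(a_\emptyset)$ for every $\Delta$; taking $a_\emptyset\in A$ and $\Delta=\{x=a_\emptyset\}$ forces $b_\emptyset=a_\emptyset$, which a strong subtree with $l_0>0$ cannot deliver, and Milliken's theorem does not let you insist that $l_k=k$. ``Renormalizing'' the levels of $S$ can rescue s-indiscernibility but not local basedness, since it changes the quantifier-free $L_{s,\omega}$-type of the witnessing tuples in the original tree. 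A secondary issue is that Milliken's theorem in its standard form concerns finitely branching trees; for $\omega^{<\omega}$ you would in any case have to pass through finitely branching approximations and compactness, which you mention but do not carry out.

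The argument in \cite{KKS14} (like the standard treatments of this modeling theorem) avoids Milliken altogether, precisely because the level predicates make the problem level-preserving: one proves the modeling property for $\omega^{\leq n}$-indexed trees by induction on $n$, decomposing a tree of height $n+1$ into its root together with the $\omega$-indexed sequence of height-$n$ subtrees, applying the induction hypothesis to make these mutually s-indiscernible, and then using Fact \ref{e-r} (Erd\H{o}s--Rado extraction) to make the sequence of subtrees indiscernible --- an indiscernible sequence of mutually s-indiscernible trees is s-indiscernible, cf.\ Remark \ref{rkforindnmlyseq}(2) --- with a final compactness step to pass from $\omega^{\leq n}$ to $\omega^{<\omega}$. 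Every step of that construction keeps each node at its own level, which is why local basedness in $L_{s,\omega}$ survives. Milliken/Halpern--L\"auchli is the right tool for the \emph{level-collapsing} notion of tree indiscernibility (the language without the $P_\beta$), not for this one; to salvage your approach you would need a level-preserving Ramsey statement for $\omega^{<\omega}$, and proving that is essentially the induction just described.
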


We now recall from \cite{KR20},\cite{DKR22}  an $L_{s,\alpha}$-tree  $\mathcal{T}_{\alpha}$, while {\em making one change}   as suggested in \cite{Kim14}. Namely  
 we consider all $\beta\leq \alpha$ in the following definition (1) (so that every $\CT_\alpha$ tree has a root), in order to 
make induction steps more palatable when we produce such  trees of parameters possessing desired properties.

\begin{Definition}\label{talphatree}
Suppose $\alpha$ is an ordinal.  We define $\mathcal{T}_{\alpha}$ to be the set of functions $\eta$ such that 
\be
\item $\text{dom}(\eta)$ is an end-segment of $\alpha$ of the form $[\beta,\alpha)$ for $\beta\leq\alpha$,

\item $\text{ran}(\eta) \subseteq \omega$, and 

\item (finite support)  the set $\{\gamma \in \text{dom}(\eta) : \eta(\gamma) \neq 0\}$ is finite.    
\ee
We interpret $\mathcal{T}_{\alpha}$ as an $L_{s,\alpha}$-structure as follows.
\begin{itemize}

\item We put $\eta \unlhd \xi$ if $\eta \subseteq \xi$.  Write $\eta \perp \xi$ if $\neg(\eta \unlhd \xi)$ and $\neg(\xi \unlhd \eta)$.  

\item $\eta \wedge \xi := \eta\lceil {[\beta, \alpha)} = \xi\lceil {[\beta, \alpha)}$ where
 $$\beta = \text{min}\{ \gamma \leq \alpha : \eta\lceil {[\gamma, \alpha)} =\xi\lceil {[\gamma, \alpha)}\}$$
(by finite support,  $\beta$ is not a limit ordinal). 

\item Put $\eta <_{\lex} \xi$ if  $\eta \vartriangleleft \xi$ or, $\eta \perp \xi$ with $\text{dom}(\eta \wedge \xi) = [\gamma +1,\alpha)$ and $\eta(\gamma) < \xi(\gamma)$.

\item For  $\beta\leq \alpha$ and $\eta:[\beta,\alpha)\to \omega$ in $\CT_\alpha$, 
we write $|\eta|:=\beta$, the {\em level} of $\eta$.
Then we put  $P_{\beta} := \{ \eta \in \mathcal{T}_{\alpha} : |\eta| = \beta\}$.  Note that $P_{0}$ is the set of \emph{top} level elements, and $P_{\alpha}=\{\emptyset\}$ is the set of the root.  
\end{itemize}
\end{Definition}

\begin{definition}
Suppose $\alpha,\beta$ are  ordinals.  
\begin{enumerate}
\item (Concatenation)  Suppose that $\eta :[\beta,\alpha)\to\omega$  in $\CT_\alpha$ is given where 
$\beta\leq \alpha$.  For  $i < \omega$, we let $\eta^\smallfrown \langle i \rangle$ denote the function $\eta \cup \{(\gamma,i)\}:[\gamma,\alpha)\to\omega$ in $\CT_\alpha$, if $\beta=\gamma+1$.
In particular,  $\la i\ra:=\emptyset^\smallfrown\la i\ra$ is in $\CT_{\alpha}$ for {\em any successor} $\alpha$.

On the other hand we  let $\langle i \rangle^\smallfrown \eta $ denote the function $\eta \cup \{(\alpha,i)\}:[\beta,\alpha+1)\to\omega$, which is 
(due to our definition,   always) in $\CT_{\alpha+1}$.

\item (Canonical inclusions) If $\beta\leq \alpha$, we define the map $\iota_{\beta\alpha } : \mathcal{T}_{\beta} \to \mathcal{T}_{\alpha}$ by $\iota_{\beta\alpha }(\eta) := \eta \cup \{(\gamma, 0) : \gamma \in [\beta,\alpha)\}$. Note that due to finite support, if $\alpha$ is  a limit ordinal and  $\xi(\ne \emptyset)\in \CT_\alpha$, then there is $\beta <\alpha$ and $\eta\in \CT_\beta$ such that $\iota_{\beta\alpha}(\eta)=\xi$, and hence   $\CT_\alpha =\bigcup_{\beta<\alpha}
\iota_{\beta\alpha}(\CT_\beta)\cup\{\emptyset\}$.

\item (The all $0$'s path) Given $\beta \leq \alpha$,  $\zeta_{\beta}\in\CT_\alpha$ denotes the function with $\text{dom}(\zeta_{\beta}) = [\beta, \alpha)$ and $\zeta_{\beta}(\gamma) = 0$ for all $\gamma \in [\beta,\alpha)$.  
\end{enumerate}
\end{definition}

We recall that  due to compactness applied to Fact \ref{modeling} we have the following modeling theorem for $\CT_\alpha$ trees.

\begin{Fact}\label{tamodeling} ($T$ is any theory.)
For any ordinal $\alpha$, all $\CT_\alpha$-indexed trees have the modeling
property. 
\end{Fact}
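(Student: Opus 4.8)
\textbf{Proof proposal for Fact \ref{tamodeling}.}

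The plan is to deduce the modeling property for $\CT_\alpha$-indexed trees from the $\omega^{<\omega}$ case (Fact \ref{modeling}) by a compactness argument, exactly as suggested in the sentence preceding the statement. First I would observe that it suffices to handle the case where $\alpha$ is finite, and then pass to arbitrary $\alpha$ by compactness. For finite $\alpha$, the tree $\CT_\alpha$ is essentially a finitely-branching-by-$\omega$ tree of finite height $\alpha$, and its quantifier-free $L_{s,\alpha}$-type structure embeds into that of $\omega^{<\omega}$: concretely, I would exhibit an $L_{s}$-embedding that respects $\unlhd$, $\wedge$, $<_{\lex}$, and sends each level $P_\beta$ of $\CT_\alpha$ into the appropriate level of $\omega^{<\omega}$ (reindexing levels so that the root of $\CT_\alpha$, which sits at level $\alpha$, matches the root of $\omega^{<\omega}$). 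Because quantifier-free $L_{s,\alpha}$-types of finite tuples are then determined by quantifier-free $L_{s,\omega}$-types of their images, s-indiscernibility and local basedness transfer directly: given a $\CT_\alpha$-indexed tree $(a_i \mid i\in\CT_\alpha)$, I transport it along the embedding to an $\omega^{<\omega}$-indexed family, apply Fact \ref{modeling} to extract an s-indiscernible family locally based on it over $A$, and pull the result back to a $\CT_\alpha$-indexed tree with the same two properties.

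For general (possibly infinite) $\alpha$, the idea is a standard compactness/direct-limit argument. Using the canonical inclusions $\iota_{\beta\alpha}$ and the fact that every finite tuple from $\CT_\alpha$ lies in the image $\iota_{\beta\alpha}(\CT_\beta)$ of some $\CT_\beta$ with $\beta$ finite (here I use that quantifier-free $L_{s,\alpha}$-types of finite tuples are governed by finitely many levels, together with finite support), I would write down the type expressing that a sought family $(b_i\mid i\in\CT_\alpha)$ is s-indiscernible over $A$ and locally based on $(a_i\mid i\in\CT_\alpha)$ over $A$. Each finite subset of this type mentions only finitely many levels and finitely many indices, hence is realized by applying the finite case to the relevant finite subtree. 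By compactness the full type is consistent, and any realization is the desired s-indiscernible, locally based $\CT_\alpha$-indexed tree.

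The main obstacle I anticipate is bookkeeping the level structure of $\CT_\alpha$ correctly under the embedding into $\omega^{<\omega}$: in $\CT_\alpha$ the domains are \emph{end-segments} $[\beta,\alpha)$ so the root lives at the top level $\alpha$ and leaves at level $0$, which is the reverse of the usual convention for $\omega^{<\omega}$, and the finite-support condition must be matched against the finitely-branching structure. I would need to check that the embedding genuinely preserves $\wedge$ and $<_{\lex}$ (not merely $\unlhd$), since s-indiscernibility is stated with respect to full quantifier-free $L_{s,\alpha}$-types. Once the level reindexing and the predicates $P_\beta$ are aligned, the preservation of $\wedge$ and $<_{\lex}$ should follow from their definitions, and the remainder is routine transport of structure plus compactness; so I expect this verification of the embedding, rather than the compactness step itself, to be where the real care is required.
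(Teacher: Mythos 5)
The paper itself offers no proof of this Fact; it is recalled from \cite{KR20},\cite{DKR22} with exactly the pointer you picked up ("compactness applied to Fact \ref{modeling}"), so the question is whether your fleshed-out version actually works. Your finite-$\alpha$ step does: for finite $n$, reversing domains identifies $\CT_n$ with $\omega^{\leq n}$ as an $L_s$-structure (with $P_\beta$ sent to the length-$(n-\beta)$ level, and $\unlhd$, $\wedge$, $<_{\lex}$ matching up), and one extends a $\CT_n$-indexed family arbitrarily to an $\omega^{<\omega}$-indexed one, applies Fact \ref{modeling}, and restricts; since matching quantifier-free types preserves levels, both s-indiscernibility and local basedness survive the restriction. (You should say explicitly that you extend to all of $\omega^{<\omega}$ before invoking Fact \ref{modeling}, but that is minor.)

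The genuine gap is in the passage to infinite $\alpha$. Your claim that every finite tuple from $\CT_\alpha$ lies in $\iota_{\beta\alpha}(\CT_\beta)$ for some \emph{finite} $\beta$ is false: the image of $\iota_{\beta\alpha}$ consists of elements of level at most $\beta$ that vanish above level $\beta$, so an element of level $\omega$ in $\CT_{\omega+1}$, say, lies in no $\iota_{m,\omega+1}(\CT_m)$ with $m$ finite. Finite support gives you finitely many \emph{relevant} levels, not levels bounded by a finite ordinal, so the finite fragments of your type are not literally instances of the finite case, and the reduction as written fails. The standard repair is precisely the bookkeeping you elide: given a finite fragment, close the mentioned indices under $\wedge$, let $w$ be the (finite) set consisting of their levels, the levels in their supports, and $\alpha$, and pass to the subtree of $\CT_\alpha$ of elements with level in $w$ and support contained in $w$; this subtree is isomorphic to $\omega^{\leq |w|-1}$ after \emph{relabelling} the level predicates by the order isomorphism $w\to |w|$. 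One must then check that tuples of $\CT_\alpha$ with equal quantifier-free $L_{s,\alpha}$-types occupy the same level set, so that s-indiscernibility and local basedness computed in the relabelled finite tree yield the required instances back in $\CT_\alpha$. With that substitution your compactness argument goes through; without it, the appeal to the finite case is not justified.
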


In the rest we assume trees of parameters are indexed by $L_{s,\alpha}$-trees having the modeling property (which we may simply say `modeling' or `s-modeling').

\begin{Definition}
A sequence  $(A_i\mid i<\kappa)$ of trees of  parameters   is called {\em mutually s-indiscernible} over $B$ if for each $i<\kappa$, $A_i$ is s-indiscernible over
$BA_{\ne i}$. 
\end{Definition}

We will freely use the following remark throughout the paper.  

\begin{Remark}  \label{rkforindnmlyseq} \be
\item Let  $C$ be a tree of parameters. If $C$ has the unique root $c_\emptyset$ then $C$ is s-indiscernible over $A$ if and only if it is $s$-indiscernible over $Ac_\emptyset$. 
Now for any set $B$, due to s-modeling  there is $C'$ which is s-indiscernible over $B$ locally based on $C$ over $B$.
If already $C$ is s-indiscernible over $B'(\subseteq B)$ then it  easily  follows that    $C\equiv_{B'} C'$. 

\item
If a sequence $(A_i\mid i<\omega)$ of trees is  $B$-indiscernible  and mutually s-indiscernible over $B$, then it
is s-indiscernible over $B$.

\item For sets $A,B$ and a tree of parameters $C=(c_\eta\mid \eta\in I)$, suppose that $ A\ind B C$
and $C$ is s-indiscernible over $B$.  If $C'=(c'_\eta\mid \eta\in I)$ is s-indiscernible over $AB$ and  locally based on $C$ over $AB$, then still $A\ind BC'$ holds:   If not, then  there is 
a forking  formula $\varphi(x,b\bar c'_{\bar \eta})$ with finite $b\in B$ and a finite tuple $\bar c'_{\bar \eta}\in C'$, realized by finite $a\in A$.
Owing to local basedness, there is $\bar c_{\bar \nu}\in C$ such that $\varphi(a,b\bar c_{\bar \nu})$ holds,
where  $\text{qftp}(\bar \eta)\equiv_{L_{s,\alpha}} \text{qftp}(\bar \nu)$. Since $C'$ is s-indiscernible over  $AB$, we have $\bar c'_{\bar \eta}\equiv_{AB}\bar c'_{\bar \nu}$, in particular 
 $\models \varphi(a, b\bar c'_{\bar \nu})$ as well, and $\varphi(x,b\bar c'_{\bar \nu})$ forks. Now due to (1), it follows $C'\equiv_BC$. Hence  $\varphi(x,b\bar c_{\bar \nu})$ also  is a  forking formula realized by $a$,  which contradicts  $A\ind BC$.    
\ee
\end{Remark} 

The following lemma plays a key role in proving the main theorem.  To show the lemma,
we borrow techniques from \cite{CKR20}  in order to  produce s-indiscernible trees preserving suggested   independence relations.

\begin{Lemma}\label{gettingmseq} Suppose that $C_0=(a_\eta\mid \eta\in\CT_\alpha)$ is s-indiscernible and   $I= \la b_i\mid i<\omega\ra$ is Morley, where $b_0=a_\emptyset\in C_0$.
Then for any $B$, there is a sequence $L=((a_{\eta,i}\mid \eta\in\CT_\alpha)\mid i<\omega)$ and $B_0\equiv B$ such that:  
\be\item $C_0=(a_{\eta,0}\mid \eta\in \CT_\alpha)$;

\item $\la a_{\emptyset,i}\mid i<\omega\ra \equiv I$ and
$\la a_{\emptyset,i}\mid 0< i<\omega\ra $  is Morley over $(\emptyset, C_0B_0)$; and

\item  $L$ is  $B_0$-indiscernible and mutually  s-indiscernible over $B_0$ (so s-indiscernible over $B_0$ as well). 
\ee
\end{Lemma}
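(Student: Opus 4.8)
\emph{Strategy.} The plan is to spread s-indiscernible copies of $C_0$ along the Morley sequence $I$, use the extension folklore to make the sequence of roots $\ind$-independent over all of $C_0$ and over $B$, and then extract from this configuration an array of trees that is at once sequence-indiscernible and mutually s-indiscernible over a conjugate $B_0$ of $B$, all while transporting the root independence. I first note a simplification: once $L$ is $B_0$-indiscernible, its tail $(L_i\mid i\ge1)$ is automatically indiscernible over $L_0B_0=C_0B_0$, so the tail of roots is $C_0B_0$-indiscernible for free; hence the only independence I must actively create and preserve is the $\ind$-independence of the tail of roots over $(\emptyset,C_0B_0)$.

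\emph{Stage 1: copies along the roots, then moving the base.} By Fact~\ref{e-r} stretch $I$ to a long Morley sequence $(b_i\mid i<\lambda)$, still Morley, $\lambda$ large. Since $b_i\equiv b_0=a_\emptyset$ for each $i$, there is an s-indiscernible tree $C_i$ with root $b_i$ and $(C_i,b_i)\equiv(C_0,b_0)$; keep $C_0$ the given tree (Remark~\ref{rkforindnmlyseq}(1) lets me pass between s-indiscernibility over $\emptyset$ and over the root). The tail $T=(b_i\mid i\ge1)$ is Morley over $(\emptyset,b_0)$, being $b_0$-indiscernible with $b_i\ind_\emptyset b_0b_{0<<i}$. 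Apply Fact~\ref{indextflore}(2) with base $b_0$ and parameter $C_0B$ to get $C^\dagger\equiv_{b_0}C_0B$ over which $T$ is Morley; letting $\sigma$ fix $b_0$ with $\sigma(C^\dagger)=C_0B$ and applying $\sigma$ only to $T$ and to the trees $C_i$ $(i\ge1)$, the moved tail $\sigma(T)$ becomes Morley over $(\emptyset,\sigma(b_0C^\dagger))=(\emptyset,C_0B)$, while $C_0$ is untouched, the full root sequence stays $\equiv I$ (as $\sigma(T)\equiv_{b_0}T$), and the trees over the new roots are again s-indiscernible copies. Thus I obtain a sequence of s-indiscernible trees, first one the given $C_0$, whose tail of roots is Morley over $(\emptyset,C_0B)$.

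\emph{Stage 2: extraction.} Following the method of \cite{CKR20}, I extract an array $L=(L_i\mid i<\omega)$ that is $B_0$-indiscernible (treat each tree as one tuple and use Fact~\ref{e-r}) and mutually s-indiscernible over $B_0$ (apply the $\CT_\alpha$-modeling property, Fact~\ref{tamodeling}, to the rows), for some $B_0\equiv B$; Remark~\ref{rkforindnmlyseq}(2) then gives s-indiscernibility of $L$ over $B_0$, i.e.\ clause (3). As the extraction proceeds by local basedness, Remark~\ref{rkforindnmlyseq}(3) carries the Stage~1 independence, so the tail of roots of $L$ is $\ind$-independent over $(\emptyset,L_0B_0)$; together with the automatic indiscernibility noted above this is clause (2) relative to $L_0$, and the extracted root sequence inherits the type of $I$. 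A final automorphism, allowed because the extracted $0$-th slice $L_0$ has the same type as $C_0$, resets $L_0$ to equal $C_0$ (clause (1)) and carries $B_0$ to the required conjugate of $B$, turning ``over $(\emptyset,L_0B_0)$'' into ``over $(\emptyset,C_0B_0)$''.

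\emph{Main obstacle.} The crux is Stage~2: producing sequence-level $B_0$-indiscernibility together with row-level (mutual) s-indiscernibility \emph{simultaneously}, without destroying either $L_0=C_0$ or the relation ``tail of roots $\ind$-independent over $(\emptyset,C_0B_0)$''. The two indiscernibilities come from different devices---extraction of a sequence of tuples versus tree modeling of the rows---and must be interleaved so neither spoils the other, and before the root independence can be transported by Remark~\ref{rkforindnmlyseq}(3) the relevant trees must already be arranged s-indiscernible over the correct base, which constrains the order of the modeling steps. Keeping the $0$-th slice equal to the prescribed $C_0$ forces the use of root- and base-fixing automorphisms (Remark~\ref{rkforindnmlyseq}(1)) rather than arbitrary ones. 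I expect the bookkeeping that the tree-modeling step preserves both the sequence indiscernibility and the root independence to be the hardest point.
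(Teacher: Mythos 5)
Your overall route coincides with the paper's: stretch $I$ to a long Morley sequence, place an s-indiscernible tree over each root, use extension together with Remark \ref{rkforindnmlyseq}(3) to make the tail of roots independent over $(\emptyset,C_0B_0)$, and then extract a $B_0$-indiscernible, mutually s-indiscernible array via Fact \ref{e-r} and tree modeling. The problem is that the two inductive constructions which constitute essentially the entire proof are named but not carried out. Concretely, your Stage 1 obtains the root independence by one application of Fact \ref{indextflore}(2), but it does \emph{not} arrange that $C_0$ is s-indiscernible over $B_0$ together with the whole root sequence (the paper's Claim \ref{gettingmseq}.1(ii)). That property is indispensable for everything that follows: each time a tree is re-modeled over a larger parameter set, you must know that the modeled copy has the same type as the original over the base already committed to, and Remark \ref{rkforindnmlyseq}(1) delivers this only when the tree is \emph{already} s-indiscernible over that base; without it you cannot pull the copy back by an automorphism fixing what you have built. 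Securing (ii) and (iii) simultaneously is exactly what forces the transfinite induction of Claim \ref{gettingmseq}.1, which interleaves extension (producing $d^*_i\ind d_{<i}B_0C_0$) with re-modeling of $C_0$ over $d_{\leq i}B_0$ at every step. A single application of Fact \ref{indextflore}(2) followed by an automorphism yields (iii) but loses control of (ii).

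Second, your Stage 2 asserts precisely the hardest point: that the rows can be made mutually s-indiscernible over $B_0$ without destroying the sequence indiscernibility, the root independence, or one another's s-indiscernibility. The danger is real --- modeling row $j'$ over the remaining rows can a priori ruin the s-indiscernibility of a different row over the rest. The paper's Claim \ref{gettingmseq}.2 avoids this by choosing, for each $j$, fresh copies $D_{j'}$ in increasing order of $j'$, each s-indiscernible over $D_{<j'}C_{j'<<j}D'_jd_{\geq j}B_0$ and locally based on $C_{j'}$, and then proving mutual s-indiscernibility by the consecutive local-basedness argument for (a)$^*$; moreover the coordination condition $d_{\geq j}C_j\equiv_{B_0}JC_0$ is what makes the final Fact \ref{e-r} extraction succeed, and your uncoordinated choice $(C_i,b_i)\equiv(C_0,b_0)$ does not provide it. You correctly flag this bookkeeping as the crux, but flagging it is not doing it: as written, the proposal is a plan for the proof rather than a proof.
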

\begin{proof} We first  choose $\kappa$ large enough with respect to $|B|+|C_0|$, and extend $I$ to  $I_\kappa=\la b_i\mid i<\kappa\ra$ which is still Morley.

\medskip 

\noindent {\em Claim \ref{gettingmseq}.1.}  There are  $B_0\equiv B$ and $J=(d_i\mid i<\kappa)\equiv_{b_0} I_\kappa$  with $b_0=d_0$ such that 
\be
\item[(i)] $J$ is Morley over $(\emptyset, B_0)$;

\item[(ii)] $C_0$ is s-indiscernible over $B_0J$; and

\item[(iii)] $J\smallsetminus \{b_0\}$ is independent over $(\emptyset, C_0B_0)$.

\ee

\medskip

\noindent {\em Proof of Claim \ref{gettingmseq}.1.} By Fact \ref{indextflore}(2), there is $B'_0\equiv B$ such that $I_\kappa$ is Morley over 
$(\emptyset, B'_0)$. Now  by modeling, there is  $(a_\emptyset\in) C'_0$ such that $C'_0$  is s-indiscernible over $a_\emptyset B'_0$
locally based on $C_0$ over $a_\emptyset B'_0$.

Notice now  that $C_0\smallsetminus\{a_\emptyset\}$ is s-indiscernible over $a_\emptyset$. Thus by Remark \ref{rkforindnmlyseq}(1) we have  $C_0\equiv_{a_\emptyset} C'_0$. Hence there is $J'B_0$ such that   $J'B_0C_0\equiv_{a_\emptyset}I_\kappa B'_0C'_0$. In particular  $C_0$ is s-indiscernible over $B_0$ and  $J'$ is Morley   over $(\emptyset, B_0)$ (*). In order to further satisfy (ii) and (iii), we will find a suitable $J\equiv_{B_0} J'$ as follows.

Write $J':=(d'_i\mid i<\kappa)$ where $d'_0=b_0$.  Using induction on $\kappa$,  we will construct  $J=(d_i\mid i<\kappa)$ ($d_0=d'_0$) such that for each $0<i<\kappa$, 
\be\item[(a)]
$d_{\leq i}\equiv_{d_0B_0} d'_{\leq i}$; 

\item[(b)] $C_0$ is s-indiscernible over $d_{\leq i}B_0$; and

\item[(c)]
$d_i\ind B_0C_0d_{<i}$.
\ee 

Hence  given  $0<i<\kappa$, assume that we have obtained $d_j$ for   
all $(0<)j<i$ (as said $d_0=d'_0=b_0$) satisfying (a),(b),(c). Now by  extension with (a) and (*) above, there is $d^*_i$ such that  $d_{<i}d^*_i\equiv_{d_0B_0} d'_{\leq i}$ and $d^*_i\ind  d_{<i}B_0C_0$ (**). Then by modeling there is $C^*_0$ s-indiscernible
over $d_{<i}d^*_iB_0$ locally based on $C_0$ over $d_{<i}d^*_iB_0$ (***). By the induction hypothesis for (b) (even for $j=0$) and Remark \ref{rkforindnmlyseq}(1), we then have that $ C^*_0\equiv_{d_{<i}B_0} C_0$. Thus there is a wanted 
$d_i$ such that $d_{i} C_0\equiv_{d_{<i}B_0}d^*_iC^*_0$.  Therefore (a),(b) for $i$ hold.  Lastly,
due to Remark \ref{rkforindnmlyseq}(3) applied to (**) and (***) with the induction hypothesis for (b), we have $d^*_i\ind  d_{<i}B_0C^*_0$. Hence (c) for $i$ also holds.

In conclusion we have produced $J$; and due to (a),(b),(c), together with $B_0C_0$, Claim \ref{gettingmseq}.1 is verified.
\qed

\medskip

We continue to  work with $J=(d_i\mid i<\kappa)$ and $B_0$
 produced in Claim \ref{gettingmseq}.1.  We further claim the following.

\medskip

\noindent {\em Claim \ref{gettingmseq}.2.}  For each $j<\kappa$, there is 
$C_j$ such that 
\be\item[(i)$^*$]  $d_{\geq j}C_j\equiv_{B_0}JC_0$,

\item[(ii)$^*$] $\la C_0,\dots ,C_j\ra$  is mutually s-indiscernible over $d_{>j}B_0$. 

\ee

\medskip 

\noindent {\em Proof of Claim \ref{gettingmseq}.2.} We suppose that for every $j'<j$ we have obtained $C_{j'}$ with the properties holding. If $j=0$ then clearly  $C_0$ (with Claim \ref{gettingmseq}.1 (ii)) serves it. 

 Now consider the case $0<j$.  
 Due to (ii)$^*$ for all $j'<j$ we have that 
  $(C_{j'}\mid j'<j)$  is mutually s-indiscernible over $d_{\geq j}B_0$ ($\star$). 
  
 Moreover  there is 
 $D^*_j $ such that $d_{\geq j}D^*_j\equiv_{B_0} JC_0$, since  $J$ is $B_0$-indiscernible (Claim \ref{gettingmseq}.1(i)).
 Hence $d_j$ is the root of the tree $D^*_j$, and $D^*_j$ is s-indiscernible over $d_{\geq  j}B_0$, by Claim \ref{gettingmseq}.1(ii).  Then   by modeling we take  
 $D'_j$ which is, over  $d_{\geq  j}C_{< j}B_0 $,   s-indiscernible and locally based on $D^*_j$  ($\dag$).
 By Remark \ref{rkforindnmlyseq}(1)    we have  $D'_j\equiv_{d_{\geq j}B_0} D^*_j$ as well  ($\ddag$). 
 
 By now  we choose   $ (D_{j'}\mid j'<j)$ as follows.  
 If for all $k< j'(<j)$ we have selected  $D_k$, then  by modeling we  choose $D_{j'}$  
 s-indiscernible over $D_{<j'}  C_{j'< <j}   D'_{j}  d_{\geq  j}B_0$ locally based on $C_{j'}$.  We will show the following two properties.

\begin{enumerate}
    \item[(a)$^*$] The resulting $\{ D_k \mid k< j\}\cup \{ D'_j\} $ is mutually s-indiscernible over $d_{> j}B_0$.
    \item[(b)$^*$] $  D_{< j}\equiv_{ d_{\geq j}B_0}   C_{< j}.$   
\end{enumerate}

For  (a)$^*$, we firstly show that given  $k< j$, $D_k$ is s-indiscernible over $D_{\ne k}D'_jd_{> j}B_0$. To lead a contradiction let us deny this. 
Hence  there are   
$$k_0<\dots <k_i=k<\dots <k_n< j \ (i\leq  n);$$ 
$\varphi(\bar x_0, \dots, \bar x_n, \bar y)\in \CL(d_{> j}B_0)$; finite tuples of indices $\bar \eta_0,\dots, \bar \eta_n, \bar \mu_i, \bar \nu \in \CT_\alpha$ 
with 
$$ \text{qftp}(\bar \eta_i)\equiv_{L_{s,\alpha}} \text{qftp}(\bar \mu_i)  $$
and corresponding   tuples $\bar u^{0}_{\bar \eta_{0}}\in D_{k_0},\dots, \bar u^{n}_{\bar \eta_{n}}\in D_{k_n}$,  $\bar u^i_{\bar \mu_i }\in D_{k_i}$, $\bar w_{\bar \nu}\in D'_{j}$ such that 
 $$\models \varphi(\bar u^0_{\bar \eta_0}, \dots, \bar u^i_{\bar  \eta_i},\dots, \bar u^n_{\bar \eta_n}, \bar w_{\bar \nu})\wedge \neg \varphi(\bar u^0_{\bar \eta_0}, \dots;\bar u^i_{\bar  \mu_i};\dots, \bar u^n_{\bar \eta_n}, \bar w_{\bar \nu}).$$
By the construction of $D_{k_{i+1}},\dots, D_{k_n}$ and local basedness, we can consecutively  find 
$\bar v^n_{\bar \nu_n}\in C_{k_n}, \dots, \bar v^{i+1}_{\bar \nu_{i+1}}\in C_{k_{i+1}}$ (where of course $ \text{qftp}(\bar \eta_n)\equiv_{L_{s,\alpha}} \text{qftp}(\bar v_n)  $, and so on)
such that 
$$\varphi(\bar u^0_{\bar \eta_0}, \dots, \bar u^i_{\bar  \eta_i}; \bar v^{i+1}_{\bar \nu_{i+1}},\dots, \bar v^n_{\bar \nu_n}, \bar w_{\bar \nu})\wedge \neg \varphi(\bar u^0_{\bar \eta_0}, \dots; \bar u^i_{\bar  \mu_i};\bar v^{i+1}_{\bar \nu_{i+1}},\dots, \bar v^n_{\bar \nu_n}, \bar w_{\bar \nu})$$
holds. But then due to the s-indiscernibility  of $D_k=D_{k_i}$ over the rest of the parameters, this leads to a contradiction.

Secondly, by the similar arguments with ($\dag$) above, it also easily follows that $D'_j$ is s-indiscernible over $D_{< j}d_{> j}B_0$. We have shown (a)$^*$.

Now (b)$^*$ can be shown  straightforwardly,  by iterated applications of Remark \ref{rkforindnmlyseq}(1) using   ($\star$) above and the way of choosing $D_{<j}$.

We  can now return our attention to finding wanted $C_j$.   Due to (b)$^*$ above, there is  $C_j$  such that 
$$   C_{< j}  C_j \equiv_{ d_{\geq j} B_0} D_{< j}D'_j.$$ 
Then  due to above ($\ddag$) and (a)$^*$,
it follows that $C_j$ and $C_{\leq j}$ satisfy  (i)$^*$,(ii)$^*$ for $j$. We have shown Claim \ref{gettingmseq}.2.
\qed
\medskip

We are about  to finish our proof of   Lemma \ref{gettingmseq}. We work again with the selected $J=(d_i\mid i<\kappa)$ and $B_0$ from Claim \ref{gettingmseq}.1, 
and $\la C_i\mid i<\kappa\ra $ from Claim \ref{gettingmseq}.2.  
In  particular,  $d_{\geq i}C_i\equiv_{B_0}JC_0$ and  $(C_i\mid i<\kappa)$ is
mutually s-indiscernible over $B_0$ ($\sharp$).
Now since $\kappa$ is sufficiently large, by Fact \ref{e-r}, there is   $B_0$-indiscernible 
$(C'_i\mid i<\omega)$
finitely based on $(C_i\mid i<\kappa)$ 
over $B_0$. By an automorphism we can assume $C'_0=C_0$.
Then we put a wanted $L$ as $(C'_i\mid i<\omega)$.   Clearly (1) holds. Since  $J\equiv I_\kappa$  (Claim \ref{gettingmseq}.1), 
the first condition of (2) is satisfied. The second condition of (2) holds as well, due to that $(C'_i\mid 0<i<\omega)$ is $B_0C_0$-indiscernible, and 
Claim \ref{gettingmseq}.1(iii) with  ($\sharp$). Moreover (3) comes from ($\sharp$). The proof of Lemma \ref{gettingmseq} is completed. 
\end{proof}

\section{Existence and the main theorem}\label{mainthm}

In this section we consider the case  where Kim-forking fails to have existence. Hence after naming  parameters, we have consistent  formulas $\psi(x)\in \CL$ and 
 $\varphi_i(x,b_i)$   ($i\leq n$)  such that
$\models \psi(x)\rightarrow \varphi_0(x,b_0)\vee\dots \vee \varphi_n(x,b_n)$, and each   $\varphi_i(x,b_i)$  Kim-divides. Then  replacing 
$\varphi_i(x,b_i)$ by $\psi(x)\wedge \varphi_i(x,b_i)$ (and renaming it as $\varphi_i(x,b_i)$) allow  us   the following. 
\medskip 

{\bf In this section  we assume:} $$\models \psi(x)\leftrightarrow \varphi_0(x,b_0)\vee\dots \vee \varphi_n(x,b_n)$$ holds (all are consistent formulas) and 
each  $\varphi_i(x,b_i)$  ($i\leq n$) Kim-divides witnessed by a Morley sequence $I_i=(b_i^j\mid j<\omega)$ where $b_i^0=b_i$. We now put $D:=b_0\dots b_n$.
Notice that  for any $b'_i\equiv b_i$, $\varphi_i(x,b'_i)\models \psi(x)$, and for any $b'_0\dots b'_n\equiv D$, $ \psi(x)$ is equivalent to $\varphi_0(x,b'_0)\vee\dots \vee \varphi_n(x,b'_n)$ as well.

\begin{Lemma} \label{treeinducedffkfe} Given a cardinal $\kappa$ and   each ordinal $\alpha<\kappa$, there is  s-indiscernible $C_\alpha=(a_\eta\mid \eta\in \CT_\alpha)$ and $D_\alpha=b_0^\alpha\dots b_n^\alpha\equiv D$, and there is   $f: \kappa \to n+1$, which  satisfy the following.
\be
\item  Suppose that   $\alpha=\gamma+1$.   Then  
\be\item
$(a_{\la i\ra}\mid i\in \omega)\equiv I_{f(\gamma)}$ (where $\la i\ra \in \CT_\alpha$), and

\item for  each $0<i<\omega$, we have   that
$$a_{\la i\ra}\ind a_{\unrhd \la0\ra}(a_{ \la j\ra}\mid j<i)D_\alpha.$$
\ee

\item  $C_\alpha$ is  s-indiscernible over $D_\alpha$, and $a_\emptyset=b^\alpha_{f(\alpha)}\in D_\alpha$.

\item 
For every $\eta\in P_0$, $\{\varphi_{f(|\nu|)}(x, a_{\nu})\mid  \nu\unlhd \eta\}$ is consistent. 

\item  For $\alpha'<\alpha$, $C_{\alpha'}\subseteq C_\alpha$ via the  canonical inclusion. 
\ee
\end{Lemma}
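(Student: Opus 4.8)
The plan is to construct the pairs $(C_\alpha,D_\alpha)$ by transfinite recursion on $\alpha<\kappa$, choosing the value $f(\alpha)$ at stage $\alpha$ and maintaining the canonical-inclusion coherence (4) throughout, so that the unions taken at limits make sense. The base case $\alpha=0$ is immediate: $\CT_0=\{\emptyset\}$, so put $D_0:=D$, $f(0):=0$ and $a_\emptyset:=b_0$; then (3) asserts only that $\varphi_0(x,b_0)$ is consistent, which is a hypothesis, and (1) is vacuous.

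For the successor step $\alpha=\gamma+1$ the engine is Lemma \ref{gettingmseq}. First use an automorphism to replace $I_{f(\gamma)}$ by a Morley sequence $\equiv I_{f(\gamma)}$ whose first term is the root $b^\gamma_{f(\gamma)}$ of $C_\gamma$ (legitimate, as $b^\gamma_{f(\gamma)}\equiv b_{f(\gamma)}$). Feeding this sequence, the s-indiscernible tree $C_\gamma$, and $B:=D_\gamma$ into Lemma \ref{gettingmseq} yields $L=((a_{\eta,i}\mid\eta\in\CT_\gamma)\mid i<\omega)$ and $B_0\equiv D_\gamma\equiv D$. I reindex $L$ as a tree on $\CT_\alpha\smallsetminus\{\emptyset\}$ by declaring the subtree below $\la i\ra$ to be the $i$-th slice, i.e. $a_{\la i\ra^\smallfrown\eta}:=a_{\eta,i}$; the $0$-th slice is literally $C_\gamma$, which installs $C_\gamma\subseteq C_\alpha$ via the canonical inclusion and hence, with the inductive coherence below $\gamma$, condition (4). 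By Lemma \ref{gettingmseq}(3) and Remark \ref{rkforindnmlyseq}(2) this tree is s-indiscernible over $D_\alpha:=B_0$, while Lemma \ref{gettingmseq}(2) delivers the Morley data (1a) and the independence (1b) verbatim (note $a_{\la 0\ra}\in C_\gamma=a_{\unrhd\la0\ra}$, so the index $j=0$ is absorbed into $a_{\unrhd\la0\ra}$).

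It remains to choose the root and $f(\alpha)$ to secure (2) and (3). For a top leaf $\eta$ above the child $\la i\ra$, the set $B_\eta:=\{\varphi_{f(|\nu|)}(x,a_\nu)\mid\emptyset\neq\nu\unlhd\eta\}$ is consistent by the inductive (3) transported into the $i$-th slice; any realizer $c$ satisfies in particular $\varphi_{f(\gamma)}(x,a_{\la i\ra})$, and since $a_{\la i\ra}\equiv b_{f(\gamma)}$ we have $\varphi_{f(\gamma)}(x,a_{\la i\ra})\models\psi$, so $c\models\psi$ and thus $c\models\varphi_j(x,b^\alpha_j)$ for some $j\le n$, using $B_0\equiv D$ and $\psi\leftrightarrow\bigvee_j\varphi_j(x,b^\alpha_j)$. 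There are only $n+1$ possible indices, whereas all branches are $D_\alpha$-conjugate (the tree is s-indiscernible over $D_\alpha\supseteq\{b^\alpha_j\mid j\le n\}$, and the children form a $D_\alpha$-indiscernible sequence), so a single index $j^\ast$ works simultaneously for every $\eta$; set $f(\alpha):=j^\ast$ and $a_\emptyset:=b^\alpha_{j^\ast}$. Then (3) holds for the full branch, and since the rootless tree is s-indiscernible over $D_\alpha$, which now contains the new root, Remark \ref{rkforindnmlyseq}(1) upgrades this to s-indiscernibility of $C_\alpha$ over $D_\alpha$, giving (2).

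At a limit $\alpha$ I first form the coherent union $C_{<\alpha}:=\bigcup_{\beta<\alpha}C_\beta$ over $\CT_\alpha\smallsetminus\{\emptyset\}=\bigcup_{\beta<\alpha}\iota_{\beta\alpha}(\CT_\beta)$, which is well defined by (4) and, because the canonical inclusions preserve quantifier-free tree type and level, is s-indiscernible over $\emptyset$. I then recover a base and a root: applying the modeling/local-basedness argument of Remark \ref{rkforindnmlyseq}(1) over a copy $D_\alpha\equiv D$ yields an s-indiscernible image, and since every finite fragment of a branch already lives in some $C_\beta$, the inductive (3) plus compactness keeps each (now infinite) set $\{\varphi_{f(|\nu|)}(x,a_\nu)\mid\emptyset\neq\nu\unlhd\eta\}$ consistent (here a realizer meets $\psi$ through the bottom leaf, which is $\equiv b_{f(0)}$). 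The root and $f(\alpha)$ are then selected exactly as in the successor step. The \emph{main obstacle} is precisely this limit: the slices of $C_{<\alpha}$ were made s-indiscernible over the \emph{varying} finite bases $D_\beta\equiv D$, and one must manufacture a \emph{single} $D_\alpha\equiv D$ over which the entire union is s-indiscernible without disturbing the literal inclusions required by (4). This is handled by normalizing the union via a global $\emptyset$-automorphism supplied by the modeling property and checking that all of (1)--(4), being $\equiv_\emptyset$-invariant, survive the normalization, with compactness guaranteeing that the branch consistency (3) passes to the limit.
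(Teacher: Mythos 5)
Your proposal is correct and follows essentially the same route as the paper's proof: the same transfinite recursion, Lemma \ref{gettingmseq} applied to $C_\gamma$ and (a conjugate of) $I_{f(\gamma)}$ at successors with $L$ reindexed as $a_{\rhd\emptyset}$, the modeling/local-basedness normalization to find a single $D_\alpha$ at limits, and the same selection of the root and $f(\alpha)$ via $\psi\leftrightarrow\bigvee_j\varphi_j(x,b^\alpha_j)$ together with s-indiscernibility over the root. The only (harmless) cosmetic difference is at limits, where the paper keeps $C_{<\alpha}$ fixed and moves $D$ to $D_\alpha$ via $C_{<\alpha}D_\alpha\equiv C'_{<\alpha}D$ rather than moving the union itself.
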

\begin{proof}
Suppose that  we have determined  $C_\beta, D_\beta$ and $f(\beta) $ for all $\beta<\alpha$ satisfying the properties. We want to choose $C_\alpha,D_\alpha$ and $f(\alpha)$.

If $\alpha=0$ then choose $C_0$ to be any $b_i\in D_0:=D$  and $f(0)=i$.  This choice clearly works. 

Now if $\alpha$ is a limit ordinal, notice that $C_{<\alpha}$, the union 
 of $C_\beta$ for all $\beta<\alpha$ is also s-indiscernible, due to the canonical inclusion.
Then by modeling we  choose $C'_{<\alpha}$ which is s-indiscernible over $D$ and locally based on $C_{<\alpha}$ over $D$. Then since 
$C_{<\alpha}$ is s-indiscernible, we have $C_{<\alpha} \equiv C'_{<\alpha}$. Hence there is a wanted $D_\alpha=b_0^\alpha\dots b_n^\alpha$ such that  
$C_{<\alpha} D_\alpha\equiv C'_{<\alpha}D$, so $C_{<\alpha}$ is s-indiscernible over  $D_\alpha$ (*). Notice that by the induction hypothesis with finite support of $\CT_{<\alpha}$, given  $\eta\in P_0$ (of $\CT_{<\alpha}$),  $\{\varphi_{f(|\nu|)}(x, a_{\nu})\mid  \nu\unlhd \eta\}$ is also consistent. 
Moreover due to s-indiscernibility, for   $\nu\in P_\beta$, we have  $a_\nu\equiv b_{f(\beta)}$. 
Thus obviously 
$$ \{\varphi_{f(|\nu|)}(x, a_{\nu})\mid  \nu\unlhd \eta\}\models \psi(x).$$
Since  $\models \psi(x)\leftrightarrow \varphi_0(x,b^\alpha_0)\vee\dots \vee \varphi_n(x,b^\alpha_n)$ as well,    there is some $i_0\leq n$ such that 
$$ \{\varphi_{f(|\nu|)}(x, a_{\nu})\mid  \nu\unlhd \eta\}\cup \{\varphi_{i_0}(x,b^\alpha_{i_0})\}$$ is consistent (**). 
Then we put $f(\alpha):=i_0$ and $a_\emptyset:=b^\alpha_{i_0}$ (in $C_\alpha$ indexed by $\CT_\alpha$).
Since $a_\emptyset \in D_\alpha$, $C_\alpha$ is s-indiscernible over $D_\alpha$, due to (*) above. The rest of (2) and (4) for $\alpha$ are already satisfied.
Moreover since $C_\alpha$ is s-indiscernible over its root $b^\alpha_{i_0}$, (3) for  $C_\alpha$ also holds by (**). 

It remains to check when $\alpha=\gamma+1$. By the induction hypothesis, $C_\gamma$ is s-indiscernible.
Then by applying Lemma \ref{gettingmseq} to $C_\gamma$ and Morley  $I_{f(\gamma)}$, we find a sequence 
$L=((a_{\eta,i}\mid \eta\in\CT_\alpha)\mid i<\omega)$ with $D_\alpha\equiv D$ such that:  
\be\item[(i)] $C_\gamma =(a_{\eta,0}\mid \eta\in \CT_\alpha)$;

\item[(ii)] $\la a_{\emptyset,i}\mid i<\omega\ra \equiv I_{f(\gamma)}$ and
$\la a_{\emptyset,i}\mid 0< i<\omega\ra $  is Morley over $(\emptyset, C_\gamma D_\alpha)$; and

\item[(iii)]  $L$ is  $D_\alpha$-indiscernible and mutually  s-indiscernible over $D_\alpha$ (so s-indiscernible over $D_\alpha$ as well). 
\ee
In order   to find    a wanted tree $C_\alpha=(a_\eta\mid \eta\in \CT_\alpha)$,  we canonically  put  $a_{\rhd\emptyset}:=L$. 
Then (1) for $\alpha$ is fulfilled due to (ii) above. Now using the same argument in the previous paragraph, we can find $j_0\leq n$ so that 
we put $f(\alpha):=j_0$; $a_\emptyset:=b^\alpha_{j_0}\in D_\alpha$; and  (2),(3),(4) for $\alpha$  again follow due to (iii).
\end{proof}

We are ready to show the main theorem. 

\begin{Theorem}\label{main}
Assume $T$ is NSOP$_1$. Then Kim-forking satisfies existence. Equivalently any complete type
has an $\indk$-Morley sequence in it.
\end{Theorem}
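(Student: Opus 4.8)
The plan is to argue by contradiction, feeding the tree produced by Lemma \ref{treeinducedffkfe} into the SOP$_1$ criterion of Fact \ref{crcrit}, thereby contradicting that $T$ is NSOP$_1$. Under the standing assumption of this section — which is exactly the failure of existence for Kim-forking — I fix a sufficiently large cardinal $\kappa$ and apply Lemma \ref{treeinducedffkfe} to obtain the nested s-indiscernible trees $C_\alpha=(a_\eta\mid\eta\in\CT_\alpha)$, the parameters $D_\alpha\equiv D$, and the coloring $f\colon\kappa\to n+1$. The design is that the consistent root-to-leaf paths (property (3)) supply the consistent side of the criterion, while the branchings $(a_{\langle i\rangle}\mid i<\omega)\equiv I_{f(\gamma)}$, being the actual Kim-dividing witnesses, supply the inconsistent side.

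First I would pigeonhole on $f$: since $\kappa$ is large there are $i^*\le n$ and a large $S\subseteq\kappa$ with $f(\beta)=i^*$ for all $\beta\in S$, so a single formula $\varphi:=\varphi_{i^*}$ governs all the relevant levels. Passing to the union of the $C_\alpha$ along the canonical inclusions (property (4)), I single out the all-$0$'s branch and, at each level $\beta\in S$, compare the main-path node $b_\beta:=a_{\zeta_\beta}$ with the first off-path sibling $c_\beta:=a_{\zeta_{\beta+1}{}^\smallfrown\langle 1\rangle}$; both are children of the path node $a_{\zeta_{\beta+1}}$, hence the $0$th and $1$st entries of a copy of $I_{i^*}$. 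Relative to the pairs at deeper levels $\beta'>\beta$ the nodes $b_\beta$ and $c_\beta$ occupy identical positions — each deeper main-path node is a $\unlhd$-predecessor of both, and each deeper off-path sibling meets both at the same branching node — so they realize the same quantifier-free $L_{s,\alpha}$-type over $(bc)_{>\beta}$, and s-indiscernibility of the tree gives $b_\beta\equiv_{(bc)_{>\beta}}c_\beta$, which is precisely the form of Fact \ref{crcrit} allowed by its parenthetical remark. An application of Fact \ref{e-r} then extracts a genuinely indiscernible sequence of pairs $(b_i c_i\mid i<\omega)$ finitely based on $(b_\beta c_\beta\mid\beta\in S)$ and still satisfying this type equality.

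It remains to verify the two consistency clauses of the criterion for the extracted sequence. Consistency of $\{\varphi(x,b_i)\mid i<\omega\}$ is immediate: the $b_\beta$ all lie on a single branch, so property (3) makes every finite subset consistent, and this passes to the finitely based sequence. For the other side I would use the independence statement (1)(b): each off-path sibling $c_\beta$ is $\ind$-independent over $\emptyset$ from the entire $0$-subtree below its branching, which contains exactly the higher siblings $c_{\beta'}$ ($\beta'<\beta$) together with $D$; hence $(c_\beta)$ is $\ind$-independent, and after extraction $\ind$-Morley, while the Morley-over-$(\emptyset,\cdot)$ clause coming from Lemma \ref{gettingmseq} makes it indiscernible over that base. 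Comparing it, at a single deep branching, with the threaded witness $I_{i^*}$ (for which $\{\varphi(x,a_{\langle m\rangle})\mid m<\omega\}$ is inconsistent by construction) should then force $\{\varphi(x,c_i)\mid i<\omega\}$ to be inconsistent as well.

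The hard part is exactly this last step. Because we work over $\emptyset$ without assuming existence, Kim's lemma over a model (Fact \ref{ess.facts}(2)) is unavailable, so inconsistency of $\{\varphi(x,c_i)\}$ cannot be read off merely from $(c_i)$ being \emph{some} Morley sequence in $\tp(b_{i^*})$ — a priori a different Morley sequence could be consistent, which is the very pathology we are fighting. Indeed the off-path siblings form a comb whose meet structure (meets at ever-deeper branching nodes) genuinely differs from the star-shaped $I_{i^*}$ (a common meet), so s-indiscernibility alone does not transport the inconsistency. The inconsistency must instead be inherited from the specific witnesses $I_{i^*}$ threaded into every branching, through the strong over-the-base indiscernibility and independence built into Lemmas \ref{gettingmseq} and \ref{treeinducedffkfe}; carrying out this transfer carefully is where the real work lies. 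Once it is done, Fact \ref{crcrit} yields that $\varphi$ has SOP$_1$, contradicting NSOP$_1$, and hence Kim-forking satisfies existence.
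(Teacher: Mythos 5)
Your setup is the same as the paper's: you extract the sequence of sibling-families along the all-$0$'s path, use s-indiscernibility to get $b_\beta\equiv_{(bc)_{>\beta}}c_\beta$ for the pair consisting of the path-child $a_{\zeta_{\beta+1}{}^\smallfrown\langle 0\rangle}$ and its first sibling $a_{\zeta_{\beta+1}{}^\smallfrown\langle 1\rangle}$, and read off consistency of $\{\varphi(x,b_i)\mid i<\omega\}$ from property (3) of Lemma \ref{treeinducedffkfe}. But the step you defer --- proving that $\{\varphi(x,c_i)\mid i<\omega\}$ is inconsistent so that Fact \ref{crcrit} yields SOP$_1$ --- is not a technical detail to be ``carried out carefully''; it is the wrong direction, and it cannot be carried out. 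Under the hypothesis that $T$ is NSOP$_1$, that set is \emph{consistent}: this is precisely what the paper deduces by applying Fact \ref{crcrit} in the contrapositive (if $\{\varphi_0(x,c_{i,1})\mid i<\omega\}$ were inconsistent, the configuration you built would witness SOP$_1$). So the contradiction in the paper does not come from exhibiting an SOP$_1$ configuration at all; your plan stalls exactly where the paper's argument begins its second half.

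The ingredient you are missing is Lemma \ref{gettingwmly}, which is what makes Kim's lemma over a model available despite working over $\emptyset$. After concluding that $\{\varphi_0(x,c_{i,1})\mid i<\omega\}$ is consistent, the paper observes that the truncated array $J'_{<\omega}=\la (c_{i,j}\mid 1\le j<\omega)\mid i<\omega\ra$ is a Morley array (strong indiscernibility plus the independence clause (1)(b) of Lemma \ref{treeinducedffkfe}), and Lemma \ref{gettingwmly} produces a model $M$ over which a copy of it is a weak Morley array. Then the column $(c_{i,1}\mid i<\omega)$ and the row $(c_{0,j}\mid 1\le j<\omega)$ are both Morley sequences over $M$ in $\tp(c_{0,1}/M)$; the row is a copy of $I_0$, hence $\{\varphi_0(x,c_{0,j})\mid 1\le j<\omega\}$ is inconsistent, while the column gives a consistent instance. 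These two sequences contradict Kim's lemma for Kim-dividing over a model (Fact \ref{ess.facts}(2)). Your remark that Kim's lemma is ``unavailable'' over $\emptyset$ is exactly the obstruction that Lemma \ref{gettingwmly} was designed to remove, and without it (or some substitute for your inconsistency claim) the proposal does not close.
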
 
\begin{proof}
We keep our starting assumption in this section that Kim-forking fails to have existence, and work with chosen  $b_0\dots b_n$ and $I_i$. We will deduce a contradiction under $T$ being NSOP$_1$ (which is preserved regardless of  naming parameters). 

We also work with  Lemma \ref{treeinducedffkfe} and selected $C_\alpha$. There  we take $\kappa$ sufficiently large (to apply Fact \ref{e-r}) and 
in   the tree $C_{<\kappa}=\bigcup_{\alpha<\kappa} C_\alpha=(a_\eta\mid \eta \in \CT_{<\kappa}) $, we consider the sequence
$\la L_\alpha\mid \alpha<\kappa \ra$, where $L_\alpha=(a_{{\zeta_{\alpha+1}}^\smallfrown\la i\ra}\mid i\in\omega). $
By Fact \ref{e-r}, there is an indiscernible  sequence $$J_{<\omega}:=\la J_i=(c_{i,j}\mid j<\omega) \mid i<\omega\ra$$ finitely based on
$\la L_\alpha\mid \alpha<\kappa \ra$.
Notice that due to Lemma \ref{treeinducedffkfe}, we have that all $c_{ij}$ and some $b_i$, say $b_0$  have the same type; and 
$\{\varphi_0(x, c_{i,0})\mid i<\omega\}$ is consistent. Moreover due to s-indiscernibility, each $J_i$ is $J_{>i}$-indiscernible.
Then since  $T$ is NSOP$_1$,
 it follows that  $\{\varphi_0(x, c_{i,1})\mid i<\omega\}$ is consistent as well, by Fact \ref{crcrit}.  Again due to  s-indiscernibility from Lemma \ref{treeinducedffkfe},
we more have that 
 $$J'_{<\omega}:=\la J'_i=(c_{i,j}\mid 1\leq  j<\omega) \mid i<\omega\ra$$  is a Morley array, in particular  strongly  indiscernible. 

Now there is  a model $M$ such that $J'_{<\omega}$ is weak Morley over $M$, due to Lemma \ref{gettingwmly}.  
Hence both $(c_{i,1}\mid i<\omega)$ and $(c_{0,j}\mid 1\leq j<\omega)$ are Morley in $\tp(c_{0,1}/M)$. 
 However since $(c_{0,j}\mid 1\leq j<\omega)\equiv I_0$ as in Lemma \ref{treeinducedffkfe}, 
 $\{\varphi_0(x, c_{0,j})\mid 1\leq j<\omega\}$ is inconsistent. This  contradicts Kim's lemma over a model in NSOP$_1$ $T$.
 We have proved the theorem. 
\end{proof}

We give two remarks before finishing this section. 

\begin{remark}\label{twocommonexist}\begin{enumerate}\item 
If an arbitrary  theory $T$ satisfies  existence, then existence for $\ind^{K}$ follows trivially since $\ind$ implies $\ind^{K}$. On the other hand, if  in $T$  there is  no Morley sequence over $\emptyset$ at all, then existence for $\ind^{K}$ over $\emptyset$ again follows trivially since there is no Kim-dividing formula over $\emptyset$. 

However even if existence fails to hold in an NSOP$_1$ $T$ so that say  $p(x)\in S(\emptyset)$ forks over $\emptyset$, there may exist Morely sequences in  other complete types over $\emptyset$. Thus  even in the case, the proof of Theorem \ref{main} is not trivialized. 
For example, 
as mentioned in the beginning,   Mutchnik in \cite{Mut24} gives a two-sorted  example of NSOP$_1$ theory without  existence.
In one  sort, Morley sequences (over $\emptyset$) do not  exist, while they exist in  the other sort.

\item
Originally, in \cite[Definition 3.13]{KR20} Kaplan and Ramsey define   $\ind^K$   using invariant Morley sequences, which, as proved in  \cite[Theorem 7.7]{KR20}, coincide with  Definition \ref{k-fkdef}(1),(2)  using Morley sequences  when the theory is NSOP$_1$ and the base is a model. 
In any theory,  existence for  $\ind^K$ (as defined in Definition \ref{k-fkdef}(1),(2))    implies  existsnce for $\ind^K$  defined  using  invariant Morley sequences, since every invariant Morley sequence is a Morley sequence. Recently, in an unpublished note, James Hanson shows  $\ind^K$ with respect to invariant Morley sequences satisfies existence in any theory.
\end{enumerate}
\end{remark}

\section{Discussion} \label{discussion}

In this section  we will discuss a couple of issues. First of all it is natural to ask whether  there is a theory $T$ where Kim-forking does not have existence. 
As proved in \S\ref{mainthm}, if exists then it should   not be NSOP$_1$.

As pointed out in \S\ref{intro}, after Fact \ref{basicsoffking}, the typical circle example with the ternary $R$ does not have existence (over $\emptyset$). It is  because 
there are three distinct points $a_0,a_1,a_2$ on the circle such that 
$\models x=x\leftrightarrow R(a_0,x,a_1)\vee  R(a_1,x,a_2) \vee  R(a_2,x,a_0)$, and each $R(a_i,x,a_{i+1})$ (mod $3$) divides over $\emptyset$.  But the dividing of each formula is  witnessed 
{\em not} by a Morley sequence but only by an $\ind^d$-Morley sequence.  Thus this (although  not NSOP$_1$)  does not serve as an example  where Kim-forking fails to have existence.

With respect to this question, at least we observe the following.

\begin{remark}\label{remark:discussion0}\be\item 
Suppose that  a consistent  formula $\psi(x)\in \CL$ forks, witnessed by two consistent formulas $ \varphi(x, a_0)$ and $\phi(x,b_0)$. Namely $\models \psi(x)\leftrightarrow \varphi(x,a_0)\vee\phi(x,b_0)$ and 
dividing of    $\varphi(x,a_0)$ and  $\phi(x,b_0)$ is witnessed by indiscernibles $I=(a_i\mid i<\omega)$, $J=(b_i\mid i<\omega)$, respectively. 
We claim that $I$ (hence $J$ as well) can   neither be $\ind^d$-Morley, nor Morley. (In general, the  reversed sequence of  a non ($\ind^d$-)Morley sequence
can be ($\ind^d$-)Morley. But the proof below says $I$  is not  $\ind^d$-Morley in either  direction. 
Moreover the circle example above  says the  argument below  does not work if three (or more) formulas witness forking of $\psi(x)$):

Note that 
 there are  $ k(\geq 1)$ and $k'\geq 2$ such that 
$\varphi(x, a_0)$ $(k+1)$-divides, 
but not  $k$-divides, via $I$; and $\phi(x,b_0)$ $k'$-divides, via $J$ (*). 
 Consider an  $\CL$-formula 
$$\chi(x_k,\dots, x_0):= \exists z(\psi(z)\wedge \varphi(z,x_k)\wedge \dots \wedge\varphi(z,x_1))\wedge \neg\exists z'  (\psi(z')\wedge \varphi(z',x_k)\wedge \dots \wedge\varphi(z',x_0)).$$
Clearly  $(a_k,\dots, a_0)$ realizes $\chi$. It suffices, by left transitivity of $\ind^d$, to show that $\chi(x_k,\dots ,x_1;a_0)$ divides. 

By compactness there clearly  is an indiscernible $(a^j_0b^j_0\mid j<\omega)$ such that 
$ a^0_0b^0_0= a_0b_0$, and $(b_0^j\mid j\in \omega)\equiv J$. 
Then  $\chi(x_k,\dots ,x_1;a_0)$ $k'$-divides via $(a^j_0\mid j<\omega)$, namely, $\chi(x_k,\dots ,x_1;a^0_0)\wedge \dots\wedge  \chi(x_k,\dots ,x_1;a^{k'-1}_0)$
is inconsistent. Why?  Suppose  not, say realized by $c_k\dots c_1$.
Note that  for each $j<k'$,
$$\psi(z)\wedge \varphi(z,c_k)\wedge \dots \wedge\varphi(z,c_1)\models \neg \varphi (z,a^j_0)\wedge \psi(z)\models \phi(z, b^j_0).$$
Since  $ \psi(z)\wedge \varphi(z,c_k)\wedge \dots \wedge\varphi(z,c_1)$ is consistent, 
$\{\phi(x,b^j_0)\mid j<k'\}$ is then consistent, contradicting (*) above.

\item 
Consider a 2-sorted structure $(\mathbb{N},\mathcal{P}(\mathbb{N}),\in)$ where $\in$ is the usual membership relation. One can find sequences $(B_i)_{i<\omega}$ and $(B'_i)_{i<\omega}$ in $\mathcal{P}(\mathbb{N})$ satisfying $B'_0=\mathbb{N}\setminus B_0$, $B_i\cap B_j=\emptyset$, $ B'_i \cap  B'_j=\emptyset$ for each $i\neq j$, and $|B_i|=|\mathbb{N}\setminus B_i|=|B'_i|=|\mathbb{N}\setminus B'_i|=\aleph_0$ for each $i$. Since any permutation of $\mathbb{N}$ induces an automorphism of the structure,  it easily  follows that $\tp(B_i)=\tp(B'_j)$ for all $i,j$.  Put $B=B_0, B'=B'_0$. Now  $\models x=x\leftrightarrow x\in B\vee x\notin B$.
Moreover
each of the formulas $x\in B$, and $x\notin B$ (equivalent to $x\in B'$) 2-divides over $\emptyset$, witnessed by the sequences $(B_i)_{i<\omega}$ and $(B'_i)_{i<\omega}$, respectively. This is an example of (1), which is a variation of an example failing existence mentioned in \cite{She90}.
\ee
\end{remark}

Now we list the following questions.

\begin{question}\be 
\item Is there a theory where 
Kim-forking does not have  existence? As mentioned in Remark \ref{twocommonexist}(2), if
Kim-forking is defined using invariant Morley sequences, then every theory has Kim-forking existence.
\item In NSOP$_1$ $T$, can one obtain the same  Theorem \ref{main}  even if each $I_i$ in \S3 is weakened to be an  $\ind^d$-Morley sequence? 
\ee
\end{question}

 In addition to the questions above,   
 in all NSOP$_1$ theories 
we are now able to define a new kind of dividing witnessed by   $\indk$-Morley sequences,  due to the main result Theorem \ref{main}.   As  mentioned before \S\ref{intro}, it is intriguing to know how far a theoretical development can be 
made using the new dividing  for NSOP$_1$ theories, although   the new one is known to be  equal to  Kim-dividing and Kim-forking  in NSOP$_1$ theories with existence.

By the way, a quick consequence of  Theorem \ref{main} is  the following.

\begin{remark}
Assume $T$ is NSOP$_1$. Define $A\ind^*_BC$ if there is a model $(B\subseteq) M$ such that $M\indk_BAC$ and $A\indk_MC$. 
\begin{itemize}
\item (Symmetry of $\ind^*$)  Due to symmetry of $\indk$ over  a model, $A\ind^*_BC$ iff $C\ind^*_BA$.
\item (Existence of $\ind^*$) Given $A, B$, by Theorem \ref{main}, there is $(B\subseteq)M$ such that $M\indk_BA$. Then since $B\subseteq M$, we have
$A\indk_MB$. In other words, $A\ind^*_BB$ holds. 
\item  If $T$ has existence then $A\ind^*_BC$ iff $A\indk_BC$ \cite[Proposition 3.5]{CKR20}.
\end{itemize}
However it is unclear whether in $T$,  $\ind^*$ has other properties such as extension, transitivity and so on. 
\end{remark}

\section*{Acknowledgments}
The authors thank Nicholas Ramsey for his explanation about the argument in \cite{CKR20}, which is critically used in the proof of  
 Claim   \ref{gettingmseq}.2. The authors also thank Ikuo Yoneda for his valuable discussion, which helped us refine our article. The authors are grateful to James Hanson and Scott Mutchnik for helpful discussions about their results. The authors thank the anonymous referee for improving the example in Remark \ref{remark:discussion0}(2).

All authors are supported by an NRF of Korea grant 
2021R1A2C1009639. 
The second author is supported by a KIAS individual grant, project no. 6G091801.
The third author is supported by the 2023 and 2024 Yonsei University Post-Doctoral Researcher Supporting Program, project no. 2023-12-0159 and 2024-12-0214, and the G-LAMP Program of the NRF of Korea grant funded by the Ministry of Education (no. RS-2024-00441954).

\end{document}